\setlist[itemize]{labelindent=*,leftmargin=.7cm}
\setlist[enumerate]{label=(\arabic*),labelindent=*,leftmargin= .7cm}
\DeclarePairedDelimiter\ceil{\lceil}{\rceil}
\DeclarePairedDelimiter\floor{\lfloor}{\rfloor}
\DeclarePairedDelimiter\intbrackets{\llbracket}{\rrbracket}
\newtheorem{defi}{Definition}[section]
\newtheorem{prop}[defi]{Proposition}
\newtheorem{theorem}[defi]{Theorem}
\newtheorem{lemma}[defi]{Lemma}
\newtheorem{remark}[defi]{Remark}
\newcommand{\bdefi}{\begin{defi}}
\newcommand{\edefi}{\end{defi}}
\newcommand{\bprop}{\begin{prop}}
\newcommand{\eprop}{\end{prop}}
\newcommand{\btheo}{\begin{theo}}
\newcommand{\etheo}{\end{theo}}
\newcommand{\btheofr}{\begin{theofr}}
\newcommand{\etheofr}{\end{theofr}}
\newcommand{\blemm}{\begin{lemm}}
\newcommand{\elemm}{\end{lemm}}
\newcommand{\blemmfr}{\begin{lemmfr}}
\newcommand{\elemmfr}{\end{lemmfr}}
\newcommand{\brema}{\begin{rema}}
\newcommand{\erema}{\end{rema}}
\newcommand{\bexer}{\begin{exem}}
\newcommand{\eexer}{\end{exem}}
\newcommand{\bexems}{\begin{exems}}
\newcommand{\eexems}{\end{exems}}
\newcommand{\bconj}{\begin{conj}}
\newcommand{\econj}{\end{conj}}
\newcommand{\bcoro}{\begin{coro}}
\newcommand{\ecoro}{\end{coro}}
\renewcommand\mathcal{\mathscr}
\newcommand{\R}{{\cal R}}
\newcommand{\maths}[1]{{\mathbb #1}}  
\newcommand{\CC}{\maths{C}}
\newcommand{\NN}{\maths{N}}
\newcommand{\RR}{\maths{R}}
\newcommand{\ZZ}{\maths{Z}}
\def\11{{\mathbbm 1}}
\newcommand{\weakstar}{\overset{*}\rightharpoonup}
\def\e{\varepsilon}
\newcommand{\bigO}{\operatorname{O}}
\newcommand{\card}{{\operatorname{Card}}}
\newcommand{\Leb}{\operatorname{Leb}}
\newcommand{\supp}{\operatorname{supp}}
\newcommand\numberthis{\addtocounter{equation}{1}\tag{\theequation}}
\begin{document}
	
\thispagestyle{plain}
\begin{center}
	\Large
	\textbf{Effective pair correlations \\ of fractional powers of integers}

	\large
	\vspace{0.3cm}
	\hspace{0.45cm} Rafael Sayous$^{1,2}$ \orcidlink{0009-0007-6306-8546}
	
	\vspace{0.1cm}
	\today
	
	\vspace{0.3cm}
	{\small $^1$ Laboratoire de Mathématiques d'Orsay, UMR 8628 CNRS,\\
	Universit\'e Paris-Saclay, 91405 ORSAY Cedex, FRANCE.\\
	\vspace{-0.2cm}
	{\it e-mail: rafael.sayous@universite-paris-saclay.fr}}
	
	\vspace{0.2cm}
	{\small $^2$ Department of Mathematics and Statistics, P.O. Box 35,\\
	40014 University of Jyv\"askyl\"a, FINLAND.\\
	\vspace{-0.2cm}
	{\it e-mail: sayousr@jyu.fi}}
\end{center}

\noindent \textbf{Abstract}: We study the statistics of pairs from the sequence $(n^\alpha)_{n\in\NN^*}$, for every parameter $\alpha \in \; ]0,1[\,$. We prove the convergence of the empirical pair correlation measures towards a measure with an explicit density. In particular, when using the scaling factor $N\mapsto N^{1-\alpha}$, we prove that there exists an exotic pair correlation function which exhibits a level repulsion phenomenon. For other scaling factors, we prove that either the pair correlations are Poissonian or there is a total loss of mass. In addition, we give an error term for this convergence.

\vspace{0.2cm}
\noindent {\bf Keywords}: pair correlations, level repulsion, fractional power, convergence of measures.

\vspace{0.2cm}
\noindent {\bf Mathematics Subject Classification}: 11K38, 11J83, 28A33.

\medskip
\noindent{\small {\it Acknowledgments:} This research was supported by the French-Finnish CNRS IEA PaCAP. The author would like to thank J.~Parkkonen and F.~Paulin, the supervisors of his ongoing Doctorate, for their support, suggestions and corrections during this research, as well as J.~Marklof, researcher at the University of Bristol, for the discussion and the explanations regarding the unfolding technique.}

\section{Introduction}\label{sec:intro}
In order to understand the distribution of a sequence $(u_n)_{n\in\NN^*}$ in a locally compact metric additive group $G$, an important aspect is the statistics of the spacings between some pairs of points. The approach consisting in taking all pairs of points into account is the study of \emph{pair correlations}, more precisely the asymptotic study of the multisets $F_N=\{ u_n-u_m \}_{1\leq n \neq m \leq N}$ as $N \to \infty$.

These problems were initially developed in physics, especially in quantum chaos, which has lead to a purely mathematical point of view of pair correlations. See for instance \cite{rudnick1998paircorrel_fracpartpoly, aichinger2018quasienergyspectra_pairs, larcher2020pair_maximaladdenergy} for questions directly linked to quantum physics. In various examples for the group $G$, the usual point of comparison for pair correlations is the (almost sure) behaviour of those of a homogeneous Poisson point process of constant intensity on the space $G$. If the pairs from $(u_n)_{n\in\NN^*}$ have the same behaviour, the sequence is said to have Poisson pair correlations. It is of interest on its own to define precisely what this behaviour is and to quantify how "pseudorandom" a deterministic sequence has to be when its pair correlations are Poisson \cite{hinrichs2019poisson_pair_correl_multidim, aistleitner2016pair_correl_equidib, marklof2020pair_correl_equidib_mfd}. Another point of interest is then to find out whether a given sequence has this behavior or not, see the papers \cite{rudnick1998paircorrel_fracpartpoly, boca2005pairs_farey, larcher2018poissonpairs_negativeresults, lutsko2022poissonianslowlygrowingseq, weiss2023explicit}.

For instance, the sequence $(\{n^\alpha\})_{n\in\NN}$, where $\{\cdot\}$ denotes the fractional part function, has Poisson pair correlations if $\alpha$ is small enough, as proven by C.~Lutsko, A.~Sourmelidis and N.~Technau in their paper \cite{lutsko2021pairfracpowers}, and in the special case $\alpha=\frac{1}{2}$, as shown by D.~Elbaz, J.~Marklof and I.~Vinogradov in \cite{elbaz2015two_point_correl_fract_sqrtn_poisson}. As for the pseudorandomness of this sequence, there are two opposite arguments: on one hand, for all $\alpha \in \; ]0,1[\,$, it equidistributes with respect to the Lebesgue measure on $[0,1[$ (see e.g.~\cite[Theo.~2.5]{kuipers1974uniform}), on the other hand, in the case $\alpha=\frac{1}{2}$, it does not behave like a Poisson process at the level of its gaps (i.e.~when we only take into account pairs of points that are consecutive for the order on $[0,1[\, $), as pointed out by N.D.~Elkies and C.T.~McMullen \cite{elkies2004gaps}.

In this paper, the metric group $G$ is $\RR$. Let us give some examples of pair correlations in a noncompact setting. On $G=\RR$, the lengths of closed geodesics in negative curvature have Poisson pair correlation or converge to an exponential probability measure (depending on the scaling factor) \cite{pollicott1998correlpairsclosedgeod, paulinparkko2022b_exponcounting_paircorrel}. On the groups $G=\RR$ then $G=\CC$, the special case where $u_n=\log(n)$ has been shown to exhibit three different behaviours (once again depending on the scaling factor) \cite{paulinparkko2022a_pairs_log, paulinparkko2022c_pairs_complexlog}. Motivated and inspired by these works, we fix $\alpha \in \; ]0,1[\,$ and study the real sequence of general term $u_n=n^\alpha$. Let $\beta \in \; ]0,1[\,$, that we will use as a parameter that determines the scaling. We denote by $\Delta_x$ the unit Dirac mass at $x$. We define the empirical pair correlation measure of $(u_n)_{n\in\NN}$ at order $N$ as
$$\R_N^{\alpha,\beta}=\frac{1}{N^{2-\alpha-\beta}} \sum_{1\leq n \neq m \leq N} \Delta_{N^\beta (n^\alpha-m^\alpha)}.$$
One interesting behaviour in this sum will happen when $n$ and $m$ are close to the upper bound $N$. In that sense, the linear approximation $N^\beta(N^\alpha-(N-1)^\alpha) \sim \alpha N^{\beta-(1-\alpha)}$, as $N\to\infty$, suggests that a fruitful scaling is given by $\beta=1-\alpha$. Such an intuition is confirmed in our main theorem.

In order to state it, we recall that a sequence of positive measures $(\mu_n)_{n\in\NN}$ on $\RR$ is said to \emph{converge vaguely} if there exists a positive measure $\mu$ on $\RR$ such that, for every continuous and compactly supported complex-valued function $f$ defined on $\RR$, we have the convergence $\mu_N(f) \underset{N\to\infty}{\longrightarrow} \mu(f)$, and then we write $\mu_N\underset{N\to\infty}{\weakstar}\mu$. In that context, if $\mu(\RR)<\liminf_{N\to\infty} \mu_N(\RR)$ (resp.~if $\mu(\RR)=0$), we say that the convergence exhibits a \emph{loss of mass} (resp.~\emph{total loss of mass}). If there exists $\e>0$ such that $\mu(\,]-\e,\e[\,)=0$, we say that the measure $\mu$ exhibits a \emph{level repulsion of size $\e$}. Finally, saying that $(n^\alpha)_{n\in\NN^*}$ has \emph{Poisson pair correlations} means that the limit measure $\mu$ has a Radon-Nikodym derivative with respect to the Lebesgue measure which is constant. To illustrate the following theorem, an example of the pair correlation function $\rho_\alpha$ in the case $\beta=1-\alpha$ is shown on Figure \ref{fig:correl_sqrtn_10^6}.

\begin{theorem}\label{th:cv_correlations_simplified}
	We have the following vague convergence of positive measures
	$$ \R_N^{\alpha,\beta} \underset{N \to \infty}{\weakstar} \rho_\alpha \, \Leb_\RR$$
	where $\Leb_\RR$ is the Lebesgue measure on $\RR$ and $\rho_\alpha : \RR \to \RR_+$ is the measurable nonnegative function given by
	$$
	\rho_\alpha : t \mapsto \left\{
	\begin{array}{clc}
		\displaystyle 0 & \mbox{ if } & \beta > 1-\alpha, \vspace{2mm} \\
		
		\displaystyle \frac{1}{\alpha(2-\alpha)} & \mbox{ if } & \beta <1-\alpha, \vspace{2mm} \\
		
		\displaystyle \frac{\alpha^{\frac{1}{1-\alpha}}}{1-\alpha} |t|^{-\frac{2-\alpha}{1-\alpha}} \sum_{p=1}^{\floor*{ \frac{|t|}{\alpha}}} p^{\frac{1}{1-\alpha}} & \mbox{ if } & \beta=1-\alpha,
	\end{array} \right.
	$$
	where $|\cdot|$ denotes the absolute value function on $\RR$, and $\floor*{\cdot}$ is the lower integer part function from $\RR$ to $\ZZ$.
\end{theorem}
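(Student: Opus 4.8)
The plan is to prove the three cases uniformly by testing against an arbitrary $f\in C_c(\RR)$ with $\supp f\subseteq[-T,T]$ and showing that $\R_N^{\alpha,\beta}(f)\ra\int_\RR f(t)\,\rho_\alpha(t)\,dt$. Using the involution $n\leftrightarrow m$, which sends $n^\alpha-m^\alpha$ to its opposite, I first rewrite the sum over ordered pairs: with gap index $k=n-m\ge 1$ and $a_{N,m,k}:=N^\beta\big((m+k)^\alpha-m^\alpha\big)$,
$$\R_N^{\alpha,\beta}(f)=\frac{1}{N^{2-\alpha-\beta}}\sum_{k=1}^{N-1}\ \sum_{m=1}^{N-k}\big[f(a_{N,m,k})+f(-a_{N,m,k})\big].$$
The single elementary input behind all three regimes is a two-sided estimate of $a_{N,m,k}$. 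Writing $(m+k)^\alpha-m^\alpha=\alpha\,\xi^{\alpha-1}k$ for some $\xi\in(m,m+k)$ by the mean value theorem, the chain $N^{\alpha-1}\le(m+k)^{\alpha-1}\le\xi^{\alpha-1}\le m^{\alpha-1}$ (using $m+k\le N$ and $\alpha-1<0$) gives
$$\alpha k\,N^{\beta-(1-\alpha)}\ \le\ a_{N,m,k}\ \le\ \alpha k\,(m/N)^{\alpha-1}\,N^{\beta-(1-\alpha)}.$$

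The left inequality settles the case $\beta>1-\alpha$ at once: since $a_{N,m,k}\ge\alpha N^{\beta-(1-\alpha)}\ra\infty$, for $N$ large no ordered pair satisfies $|a_{N,m,k}|\le T$, hence $\R_N^{\alpha,\beta}(f)=0$ eventually and $\rho_\alpha\equiv0$ (total loss of mass).

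For the critical case $\beta=1-\alpha$ the normalisation is $N$ and the lower bound becomes $a_{N,m,k}\ge\alpha k$, so the outer sum is in fact supported on the fixed finite range $1\le k\le\floor*{T/\alpha}$, uniformly in $N$; this removes any tail issue in $k$. For each fixed $k$ one has $a_{N,\lceil sN\rceil,k}\ra\alpha k\,s^{-(1-\alpha)}=:\psi_k(s)$ pointwise on $(0,1]$, so, viewing $\tfrac1N\sum_m f(a_{N,m,k})$ as the integral of a step function and applying dominated convergence on $[0,1]$, it converges to $I_k(f):=\int_0^1 f(\psi_k(s))\,ds$. Summing the finitely many $k$ and performing, in each $I_k$, the change of variables $t=\alpha k\,s^{-(1-\alpha)}$ turns $I_k(f)$ into $\frac{(\alpha k)^{1/(1-\alpha)}}{1-\alpha}\int_{\alpha k}^\infty f(t)\,t^{-\frac{2-\alpha}{1-\alpha}}\,dt$; swapping the pointwise-finite sum over $k$ with the integral (for fixed $t$ the contributing $k$ are $1\le k\le t/\alpha$) recovers exactly $\int_0^\infty f(t)\,\rho_\alpha(t)\,dt$, and the $f(-a_{N,m,k})$ terms give the symmetric contribution on $\RR_-$.

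The subcritical case $\beta<1-\alpha$ is where $k$ is genuinely unbounded, and it is the main obstacle. Here the consecutive gaps shrink, so the relevant gaps have $k$ of order $N^{1-\alpha-\beta}$; I would set $\kappa=k\,N^{-(1-\alpha-\beta)}$ and treat the double sum over $(m,k)$ as a two–dimensional Riemann sum in $(s,\kappa)=(m/N,\kappa)$, whose lattice has $N\cdot N^{1-\alpha-\beta}=N^{2-\alpha-\beta}$ points per unit area—precisely the normalising factor. By the upper estimate above, $a_{N,m,k}\to\alpha\kappa\,s^{-(1-\alpha)}$, and the sum should converge to $\int_0^1\!\!\int_0^\infty\big(f(\alpha\kappa\,s^{-(1-\alpha)})+f(-\alpha\kappa\,s^{-(1-\alpha)})\big)\,d\kappa\,ds$; the substitution $t=\alpha\kappa\,s^{-(1-\alpha)}$ then yields $\frac1\alpha\big(\int_0^1 s^{1-\alpha}\,ds\big)\int_\RR f=\frac{1}{\alpha(2-\alpha)}\int_\RR f$, i.e.\ the constant density $\frac{1}{\alpha(2-\alpha)}$ (Poissonian). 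The delicate point—and the step I expect to require the real work—is the uniformity of this two–dimensional approximation: one must control the linearisation error $(m+k)^\alpha-m^\alpha-\alpha m^{\alpha-1}k=O\big(m^\alpha(k/m)^2\big)$ uniformly over the growing range $k\lesssim s^{1-\alpha}N^{1-\alpha-\beta}$ (so that $k/m\lesssim s^{-\alpha}N^{-(\alpha+\beta)}\ra0$ away from $s=0$), handle separately the low-mass region $s\approx0$, and justify replacing the count of integers $k$ in an interval by its length, whose relative error is $O\big(N^{-(1-\alpha-\beta)}\big)$. I would make this rigorous by first proving the asymptotic count of ordered pairs with $a_{N,m,k}\in[a,b]$ for $0<a<b$, then passing to general $f\in C_c(\RR)$ by standard approximation, using the uniform bound on the $\R_N^{\alpha,\beta}$-mass of compact sets supplied by the estimates above.
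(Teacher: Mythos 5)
Your strategy is sound, and in two of the three regimes it is complete and genuinely more elementary than the paper's route. The paper proves an effective statement (Theorem \ref{th:effective_cv_correlations}) for arbitrary scaling factors with $\phi(N)/N^{1-\alpha}\to\lambda$, using $C^1_c$ test functions: a symmetry reduction (Lemma \ref{lem:correl_symmetry}), a linearisation lemma whose Taylor error is controlled by an explicit integral comparison (Lemma \ref{lem:linear_approx}), a quantitative Riemann-sum lemma (Lemma \ref{lem:riemann_approx}), and, in the critical regime, a change of variables $h_\alpha(x)=x^{-(1-\alpha)}$ turning the measure into Riemann sums in $m$ for each fixed gap. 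Your regime $\beta>1-\alpha$ coincides with the paper's argument (your mean-value lower bound plays exactly the role of inequality \eqref{eq:empirical_level_repulsion}). Your regime $\beta=1-\alpha$ --- fixing the gap $k$, observing that only $k\le\floor*{T/\alpha}$ contribute, applying dominated convergence to the step functions $s\mapsto f(a_{N,\lceil sN\rceil,k})$, and then changing variables --- is correct, and is essentially the unfolding computation the paper gives in Section \ref{sec:unfolding}; since you only aim at vague convergence, dominated convergence legitimately replaces all of the paper's quantitative machinery there. What your route gives up is the error term, which is the paper's main contribution beyond Theorem \ref{th:cv_correlations_simplified} itself.

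The one place the write-up falls short of a proof is the Poissonian case $\beta<1-\alpha$, and you say so yourself: the convergence of the two-dimensional Riemann sum is asserted (``the sum should converge to\dots'') rather than proved, and this is precisely where the paper spends its technical effort (the summed Taylor error in Lemma \ref{lem:linear_approx}, and the threshold analysis of $x_N$ controlling $\min\{A,(N-m)\delta_{N,m}\}$). To be clear, no step of your fallback plan would fail: for $0<a<b$ and fixed $m$, the admissible $k$ form an interval whose length is, by the mean value theorem applied to $x\mapsto x^{1/\alpha}$, equal to $\frac{(b-a)}{\alpha}\,m^{1-\alpha}N^{-\beta}\,(1+o(1))$ uniformly in $m$, so the lengths sum to $\frac{b-a}{\alpha(2-\alpha)}N^{2-\alpha-\beta}(1+o(1))$, while the $O(1)$ integer-counting errors total $O(N)$, negligible against $N^{2-\alpha-\beta}$ exactly because $\beta<1-\alpha$, and the truncation $n\le N$ affects only $O(N^{1-\alpha-\beta})$ values of $m$, contributing $O(N^{2(1-\alpha-\beta)})$, also negligible. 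So the ideas and the plan are right, but as submitted the subcritical case rests on an unestablished convergence claim; you must carry out the counting argument above (or the analogue of the paper's Lemmas \ref{lem:linear_approx} and \ref{lem:riemann_approx}) before this counts as a proof of the theorem.
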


We can interpret Theorem \ref{th:cv_correlations} as a result on counting small values in the multisets $F_N=\{n^\alpha - m^\alpha \}_{1\leq n \neq m \leq N}$ as $N \to \infty$. Indeed, the theorem, together with the regularity of the function $\rho_\alpha$, is equivalent to the claim that, for all $a,b \in \RR$ such that $a<b$, we have the convergence
$$\frac{1}{N^{2-\alpha-\beta}} \card \Big( F_N \cap \big]\frac{a}{N^\beta}, \frac{b}{N^\beta}\big[ \Big) \underset{N \to \infty}{\longrightarrow} \int_a^b \rho_\alpha(t) \, dt.$$
Let us comment on the transitional regime $\beta=1-\alpha$. The even function $\rho_\alpha$ is piecewise continuous on $\RR$, with discontinuity at each point in $ \alpha \ZZ-\{0\}$, and bounded: its maximum is reached at the points $\pm \alpha$ and is equal to $\rho_\alpha(\alpha) = \frac{1}{\alpha(1-\alpha)}$. For every $k \in \ZZ- \{0\}$, the function $\rho_\alpha$ is smooth on the open interval $]k \alpha, (k+1) \alpha[$. As $t \to \pm \infty$ in $\RR-\ZZ$, a comparison with an integral shows that $\rho_\alpha\!'(t) \sim \frac{-1}{\alpha (2-\alpha)t}$. Thus the function $\rho_\alpha$ flattens around $\pm \infty$. The same comparison with an integral gives us the convergence $\rho_\alpha \underset{\pm \infty}{\rightarrow} \frac{1}{\alpha(2-\alpha)}$. This limit could be interpreted as a continuity result between the two regimes $\beta=1-\alpha$~and~$\beta<1-\alpha$. Indeed, we have the equality $\supp \R_N^{\alpha,\beta} = N^\beta F_N$, thus the points from the multiset $F_N$ sent to $\pm \infty$ when scaled by a factor $N^\beta$, under the regime $\beta=1-\alpha$, need a smaller scaling factor to be actually observed in the support of a limiting measure: those are points giving rise to the Poisson behaviour of pair correlations of $(n^\alpha)_{n\in\NN^*}$ in the regime $\beta<1-\alpha$. Such a continuity interpretation can also be argued between the cases $\beta=1-\alpha$, for which $\rho_\alpha \Leb_{\RR}$ exhibits a level repulsion of size $\alpha \lambda$, and $\beta>1-\alpha$ where we have a total loss of mass. See Figure \ref{fig:correl_sqrtn_10^6} for an example of both those continuity properties.

\begin{figure}[ht]
	\centering
	\begin{adjustbox}{clip, trim=1.4cm 2.3cm 1.4cm 3.1cm, max width=\textwidth}
		\scalebox{1.}{\includegraphics{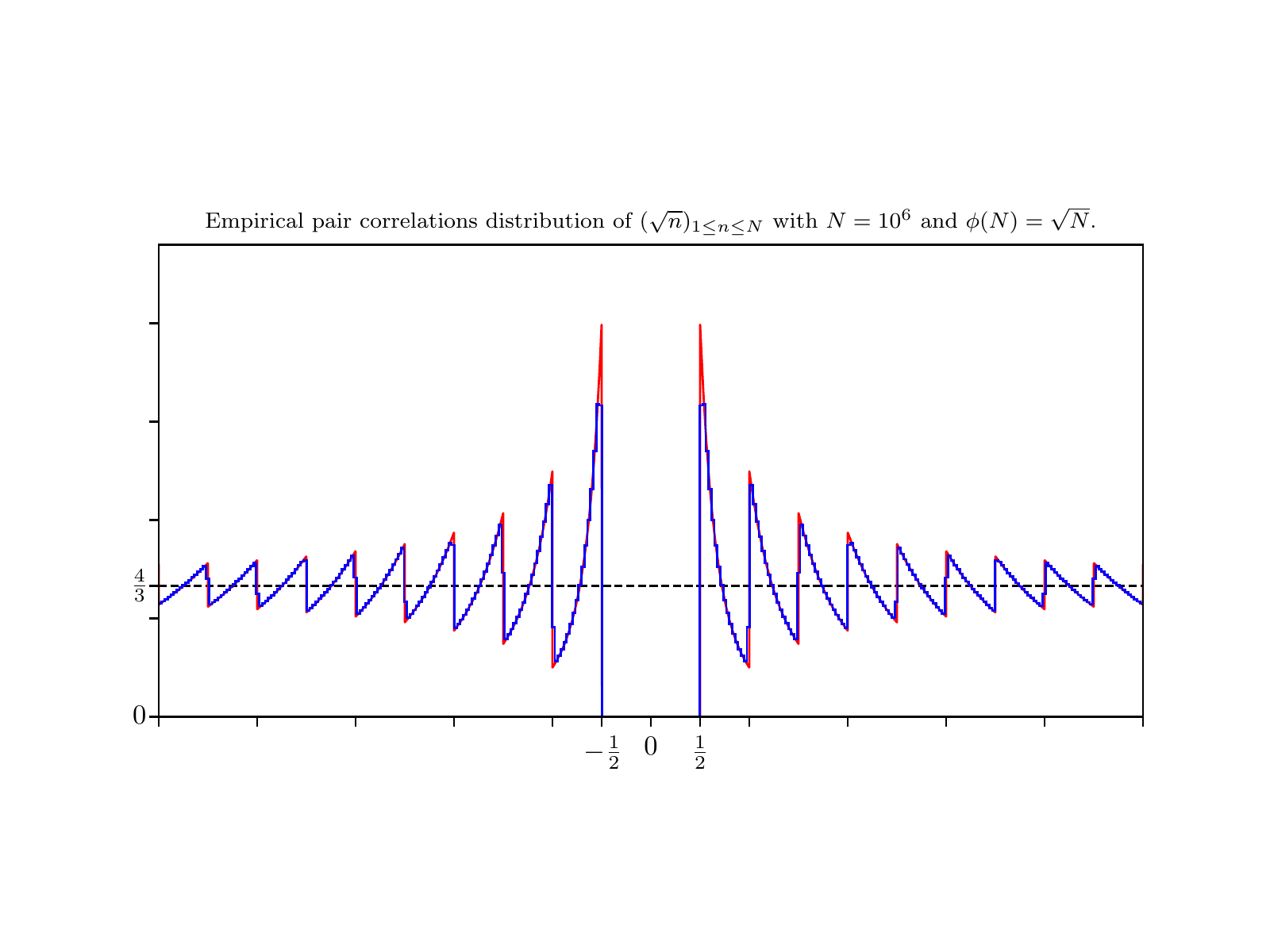}}
	\end{adjustbox}
	\caption{The empirical distribution (in \textcolor{blue}{blue}) of pair correlations for $(\sqrt{n})_{1\leq n\leq N}$ with $N=10^6$ using the scaling factor $N\mapsto \sqrt{N}$ (and renormalization factor $N\mapsto N$), and the limit distribution $\rho_{\frac{1}{2}}$ (in \textcolor{red}{red}).}
	\label{fig:correl_sqrtn_10^6}
\end{figure}

Theorem \ref{th:cv_correlations_simplified} will be stated in more detailed version in Theorem \ref{th:cv_correlations} using a wider range of scaling factors, then in an effective (stronger) version in Theorem \ref{th:effective_cv_correlations}. An interpretation of the pair correlation function $\rho_\alpha$ for $\beta=1-\alpha$ is given in Section \ref{sec:unfolding}.

Our study is much more involved than the work of \cite{paulinparkko2022a_pairs_log} on the pair correlations of $(\log(n))_{n\in\NN^*}$. Here we have different sequences to study in parallel, depending on the parameter $\alpha$. In order to have a precise estimate for the error term, it is important to keep track of its dependence on $\alpha$ in the technical lemmas we use to prove our theorem. The next section is dedicated to that matter.

\section{The main statement and technical lemmas}\label{sec:main_statement_lemmas}

Let $\alpha \in \; ]0,1[$. We will denote the set of nonnegative (resp.~positive) real numbers by $\RR_+$ (resp.~$\RR_+^*$). We are interested in the statistical behaviour of the real sequence $(n^\alpha)_{n\in\NN^*}$. For that purpose, we study its empirical pair correlation measures given by the following general term
$$\R_N^\alpha = \frac{1}{\psi(N)} \sum_{1 \leq n \neq m \leq N} \Delta_{\phi(N) (n^\alpha - m^\alpha)}$$
where for every $x \in \RR$, the notation $\Delta_x$ stands for the Dirac measure at $x$, the function $\phi : \NN \to \RR_+^*$ is called a \emph{scaling factor} and $\psi : \NN \to \RR_+^*$ is called a \emph{renormalization factor}. Both those functions are assumed to be converging to $+\infty$.

\begin{theorem}\label{th:cv_correlations}
	We assume that $\frac{\phi(N)}{N^{1-\alpha}} \underset{N\to\infty}{\longrightarrow} \lambda \in [0,+\infty]$ and for every $N \in \NN$, we set $\psi(N) = \frac{N^{2-\alpha}}{\phi(N)}$. Then, we have the following vague convergence of positive measures
	$$ \R_N^\alpha \underset{N \to \infty}{\weakstar} \rho_\alpha \, \Leb_\RR$$
	where $\Leb_\RR$ is the Lebesgue measure on $\RR$ and $\rho_\alpha : \RR \to \RR_+$ is the measurable nonnegative function given by
	$$
	\rho_\alpha : t \mapsto \left\{
	\begin{array}{clc}
		\displaystyle 0 & \mbox{ if } & \lambda = +\infty, \vspace{2mm} \\
		
		\displaystyle \frac{1}{\alpha(2-\alpha)} & \mbox{ if } & \lambda=0, \vspace{2mm} \\
		
		\displaystyle \frac{\alpha^{\frac{1}{1-\alpha}}}{1-\alpha} \Big(\frac{|t|}{\lambda}\Big)^{-\frac{2-\alpha}{1-\alpha}} \sum_{p=1}^{\floor*{ \frac{|t|}{\alpha \lambda}}} p^{\frac{1}{1-\alpha}} & \mbox{ if } & \lambda \in \RR_+^*.
	\end{array} \right.
	$$
\end{theorem}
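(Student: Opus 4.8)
The plan is to test the vague convergence against an arbitrary continuous compactly supported function $f$, reducing the claim to an asymptotic count of pairs $(n,m)$ with $1\le n\ne m\le N$ for which the rescaled difference $\phi(N)(n^\alpha-m^\alpha)$ lands in a fixed interval. By symmetry (the summand is even under swapping $n,m$ and $f$ may be taken even after symmetrizing, or one treats $t>0$ and $t<0$ separately) I would restrict to $n>m$, write $n=m+k$ with $k\ge 1$, and use the exact finite-difference expansion
\[
n^\alpha-m^\alpha=(m+k)^\alpha-m^\alpha=\alpha\, m^{\alpha-1}k\Big(1+O\big(\tfrac{k}{m}\big)\Big).
\]
The key heuristic, already flagged in the introduction via the linear approximation $N^\beta(N^\alpha-(N-1)^\alpha)\sim \alpha N^{\beta-(1-\alpha)}$, is that the dominant contributions come from $m$ comparable to $N$ and bounded gap-index $k$; there the scaled difference is $\phi(N)\,\alpha\,m^{\alpha-1}k$. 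I would introduce the rescaled variable $s=m/N\in(0,1]$, so that $\phi(N)m^{\alpha-1}=\frac{\phi(N)}{N^{1-\alpha}}s^{\alpha-1}\to\lambda\,s^{\alpha-1}$ under the hypothesis, turning the sum over $m$ into a Riemann sum.

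\emph{First} I would prove the result in the genuinely interesting transitional regime $\lambda\in\RR_+^*$. Fix $a<b$ and count pairs with $\phi(N)(n^\alpha-m^\alpha)\in\;]a,b[$. Writing $m=sN$, the leading-order condition reads $\alpha\lambda\,s^{\alpha-1}k\in\;]a,b[$, i.e.\ for each integer $k\ge 1$ the variable $s$ ranges over an interval determined by $s^{\alpha-1}\in\;]\frac{a}{\alpha\lambda k},\frac{b}{\alpha\lambda k}[$. Since $\alpha-1<0$, the map $s\mapsto \alpha\lambda s^{\alpha-1}k$ is decreasing, and the number of admissible $m$ for each $k$ is $N\cdot(\text{length in }s)+O(1)$; after normalizing by $\psi(N)=N^{2-\alpha}/\phi(N)$ and summing over $k$, the $O(1)$ and the error from the $O(k/m)$ correction must be shown to vanish. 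Summing the per-$k$ Lebesgue lengths over $k$ and changing variables from $s$ to the target coordinate $t=\alpha\lambda s^{\alpha-1}k$ should reproduce exactly the density $\frac{\alpha^{1/(1-\alpha)}}{1-\alpha}(|t|/\lambda)^{-(2-\alpha)/(1-\alpha)}\sum_{p=1}^{\lfloor |t|/(\alpha\lambda)\rfloor}p^{1/(1-\alpha)}$, the truncation of the sum at $p=\lfloor |t|/(\alpha\lambda)\rfloor$ arising because for a given $t$ only gap-indices $k=p$ with $\alpha\lambda p\le |t|$ (equivalently $s\le 1$) contribute, the constraint $m\le N$ being the source of the floor and hence of the level repulsion of size $\alpha\lambda$.

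\emph{The two degenerate regimes} follow from the same count. When $\lambda=0$ (scaling weaker than $N^{1-\alpha}$), the constraint $s\le 1$ becomes inactive in the limit: contributions spread out and for fixed $t$ the admissible $k$ range grows, so the per-$t$ count stabilizes to the constant $\frac{1}{\alpha(2-\alpha)}$, which one recovers either by letting $\lambda\to 0^+$ in the transitional density and using the integral comparison $\sum_{p=1}^{P}p^{1/(1-\alpha)}\sim \frac{1-\alpha}{2-\alpha}P^{(2-\alpha)/(1-\alpha)}$, or by a direct Poissonian computation (this matches the known Poisson regime of $(\{n^\alpha\})$ for small scalings). When $\lambda=+\infty$ (scaling stronger than $N^{1-\alpha}$), every nonzero difference is pushed to $\pm\infty$: for any fixed compact support of $f$ the count of pairs landing in it is $o(\psi(N))$, giving total loss of mass and the density $0$.

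\emph{The main obstacle} I anticipate is controlling the error from the nonlinear term $O(k/m)$ in the finite-difference expansion uniformly over the full range of $(m,k)$ while simultaneously justifying the passage from the discrete sum over $m$ to the Riemann integral. Pairs with small $m$ (where $k/m$ is not small) or with large $k$ must be shown to contribute negligibly after normalization; this requires a careful dyadic or tiered decomposition of the summation range and a quantitative bound on how many pairs can fall into a fixed window $]a,b[$, which is presumably exactly what the ``technical lemmas'' of Section~\ref{sec:main_statement_lemmas} are designed to supply and what later yields the effective error term of Theorem~\ref{th:effective_cv_correlations}.
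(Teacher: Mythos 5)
Your plan is sound in outline and reproduces the theorem, but it is organized quite differently from the paper's proof, so it is worth comparing the two. The paper proves the stronger effective statement (Theorem \ref{th:effective_cv_correlations}, tested against $C_c^1$ functions) first: after reducing to the positive part by symmetry (Lemma \ref{lem:correl_symmetry}) and linearizing exactly as you do (Lemma \ref{lem:linear_approx} is your $O(k/m)$ step, with the error summed against $\|f'\|_\infty$ rather than affecting a count), it handles the exotic regime not by counting lattice points in windows but by the change of variables $h_\alpha : x \mapsto x^{-(1-\alpha)}$, which turns the atoms $\phi(N)\alpha p/m^{1-\alpha}$, for fixed gap $p$, into an arithmetic progression in $m$ with step $(\alpha p \phi(N))^{-1/(1-\alpha)}$; each gap then contributes an honest Riemann sum (Lemma \ref{lem:riemann_approx}), the density is obtained as a limit of explicit step functions, and all errors are quantified. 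Theorem \ref{th:cv_correlations} then follows by density of $C_c^1$ in $C_c^0$. Your per-gap window count --- fix $k$, determine the interval of $s=m/N$ with $\alpha\lambda s^{\alpha-1}k \in \;]a,b[$, count integers, change variables from $s$ to $t$ --- is essentially the paper's Section \ref{sec:unfolding} unfolding computation carried out by hand: the map $(n/N,m/N,N^{1-\alpha}(n^\alpha-m^\alpha)) \mapsto (n/N,m/N,n-m)$ there plays the role of your rescaling, and your substitution $t=\alpha\lambda s^{\alpha-1}k$ is their $u=\alpha p/t^{1-\alpha}$. What your route buys is directness: no smoothing, no auxiliary measures, and the floor function (hence the level repulsion of size $\alpha\lambda$) appears transparently from the constraint $m\le N$. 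What the paper's route buys is effectiveness: the lattice-counting and Taylor errors are organized so as to yield the explicit rates of Theorem \ref{th:effective_cv_correlations}, which a pure counting argument gives less readily, since with indicator test functions one must additionally control the pairs whose linearization shift moves them across the window boundary.

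One step you should not leave as stated: obtaining the $\lambda=0$ density ``by letting $\lambda\to 0^+$ in the transitional density'' is only a consistency check, not a proof --- the two regimes involve a genuine interchange of limits, since the number of contributing gaps is bounded when $\lambda>0$ but grows with $N$ when $\lambda=0$. Your alternative, the direct computation, is the correct one: for a fixed window the gaps range up to $K_N \sim bN^{1-\alpha}/(\alpha\phi(N)) \to \infty$, the per-gap counts are summed using $\sum_{p\le K}p^{1/(1-\alpha)} \sim \frac{1-\alpha}{2-\alpha}K^{(2-\alpha)/(1-\alpha)}$, and the accumulated $O(1)$ counting errors total $O(K_N)$, which is $o(\psi(N))$ and therefore harmless.
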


We notice that, scaling the pair correlation functions $\rho_\alpha$ in the exotic case $\lambda =1$ for different $\alpha$, we can compare them with each other. Let us define the functions $\widetilde{\rho_\alpha} : t \mapsto \alpha(2-\alpha) \rho_\alpha(\alpha t)$ and see on Figure \ref{fig:correl_sqrtn_diffalpha} how these functions seem to collapse to the null function as $\alpha \to 1$, except at integer points where they explode. This remark can be considered as a continuity observation as $\alpha \to 1$, since a direct computation grants the vague convergence
$$ \frac{1}{N} \sum_{1 \leq n \neq m \leq N} \Delta_{n-m} \underset{N\to\infty}{\weakstar} \sum_{p\in\ZZ^*}\Delta_p.$$

\vspace{-0.55cm}
\begin{figure}[ht]
	\centering
	\begin{adjustbox}{clip, trim=2.5cm 0.2cm 2.2cm 0.8cm, max width=\textwidth}
		\scalebox{1.}{\includegraphics{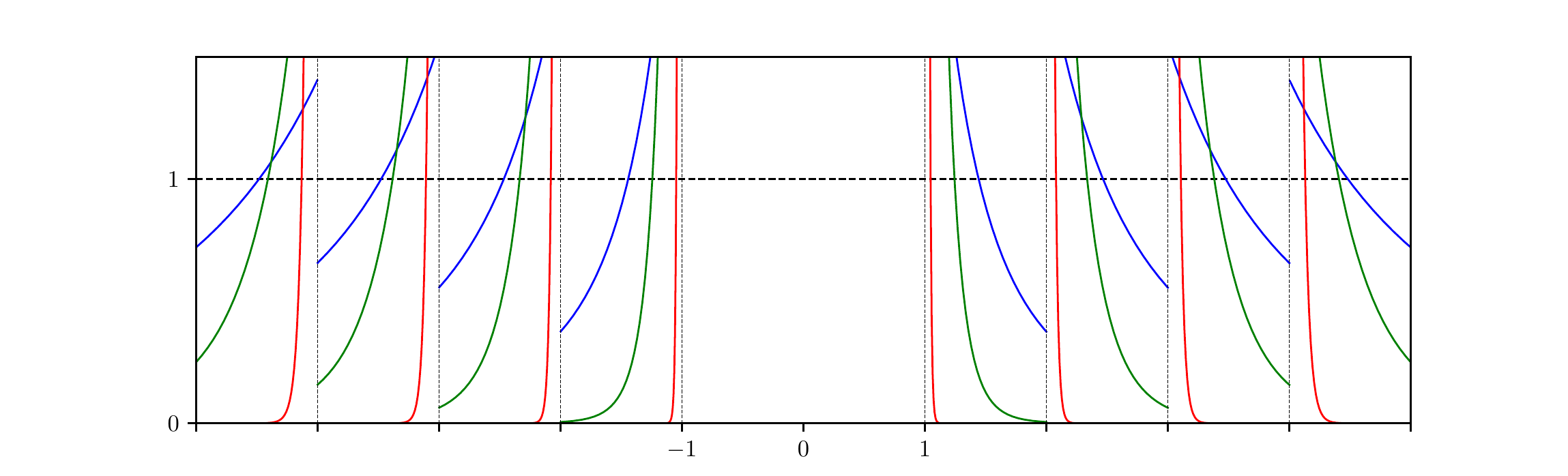}}
	\end{adjustbox}
	\caption{The scaled pair correlation functions $\widetilde{\rho_{\alpha}}$ in the exotic case $\lambda=1$ for different power parameters: $\alpha = \frac{1}{2}$ (in \textcolor{blue}{blue}), $\alpha=\frac{9}{10}$ (in \textcolor{ForestGreen}{green}) and $\alpha=\frac{99}{100}$ (in \textcolor{red}{red}).}
	\label{fig:correl_sqrtn_diffalpha}
\end{figure}

Using $C^1_c$ functions, we also obtain an effective version of Theorem \ref{th:cv_correlations}. To state it, we use Landau's notation. For functions $F,G : \NN \mapsto \CC$ depending on some parameters including $\alpha$, we write $F(N)=\bigO_\alpha(G(N))$ if there exists some constant $c_\alpha >0$, depending only on $\alpha$, and some integer $N_0$, depending on all the parameters, such that, for all $N \geq N_0$, we have the inequality $|F(N)| \leq c_\alpha |G(N)|$. In our study, the rank $N_0$ will depend on the real number $\alpha$, the size $A$ of the support of the test function we evaluate our measures on, and the scaling and renormalization factors.

\begin{theorem}\label{th:effective_cv_correlations}
	We assume that $\frac{\phi(N)}{N^{1-\alpha}} \underset{N\to\infty}{\longrightarrow} \lambda \in [0,+\infty]$ and for every $N \in \NN$, we set $\psi(N) = \frac{N^{2-\alpha}}{\phi(N)}$. Let $f \in C_c^1(\RR)$ and choose $A>1$ such that $\supp f \subset [-A,A]$.
	\begin{itemize}
		\item If $\lambda=+\infty$, then for all $N$ large enough so that $\frac{\alpha \phi(N)}{(2N)^{1-\alpha}} > A$, we have $\R_N^\alpha(f)=0$.
		\item If $\lambda=0$, then there exists $c_\alpha >0$ depending only on $\alpha$ such that, for all $N$ large enough so that $\phi(N) > \frac{A}{2^\alpha-1}$, we have the inequality
		$$
		\big|\R_N^\alpha(f) - \frac{1}{\alpha(2-\alpha)} \Leb_\RR(f)\big| \leq c_\alpha (\|f\|_\infty + \|f'\|_\infty)A^3 \Big(\frac{\phi(N)}{N^{1-\alpha}} +  \frac{1}{N^\alpha \phi(N)} + \frac{1}{N} \Big).
		$$
		\item If $\lambda \in \RR_+^*$, then using the notation $\rho_\alpha$ from Theorem \ref{th:cv_correlations}, we have the estimate
		\begin{align*}
			\R_N^\alpha(f) = & \; \rho_\alpha \Leb_\RR(f) \\
			& + \bigO_\alpha \Big( \frac{A^{\frac{3-2\alpha}{1-\alpha}} (\lambda^2 \|f'\|_\infty+\lambda \|f\|_\infty)}{N} + A^2 \lambda \|f\|_\infty \Big| \Big( \frac{\phi(N)}{\lambda N^{1-\alpha}} \Big)^{\frac{2-\alpha}{1-\alpha}} -1 \Big| \Big).
		\end{align*}
	\end{itemize}
\end{theorem}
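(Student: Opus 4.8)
The plan is to establish the effective estimate directly; Theorem~\ref{th:cv_correlations} (and hence Theorem~\ref{th:cv_correlations_simplified}) then follows from it by density of $C^1_c(\RR)$ in $C_c(\RR)$ together with the hypothesis $\frac{\phi(N)}{N^{1-\alpha}}\to\lambda$. First I would symmetrize and pass to the gap variable $k=n-m\geq1$, writing
$$\R_N^\alpha(f)=\frac{\phi(N)}{N^{2-\alpha}}\sum_{k=1}^{N-1}\ \sum_{m=1}^{N-k}(f+\check f)\big(\phi(N)\,\delta_k(m)\big),\quad \delta_k(m):=(m+k)^\alpha-m^\alpha,\ \ \check f(x):=f(-x).$$
The regime $\lambda=+\infty$ is then immediate: the increments $\delta_1(m)=(m+1)^\alpha-m^\alpha$ decrease in $m$ and $\delta_k(m)\geq\delta_1(m)$, so the least value of $\delta_k(m)$ over all admissible $(k,m)$ is $\delta_1(N-1)=N^\alpha-(N-1)^\alpha\geq\alpha N^{\alpha-1}$; once $\frac{\alpha\phi(N)}{(2N)^{1-\alpha}}>A$ we have a fortiori $\frac{\alpha\phi(N)}{N^{1-\alpha}}>A$, whence every argument $\phi(N)\delta_k(m)$ exceeds $A$ and lies outside $\supp f$, giving $\R_N^\alpha(f)=0$.

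For $\lambda\in\RR_+^*$ the essential idea is to treat the two summations asymmetrically: the inner sum over $m$ is compared with an integral, whereas the outer sum over $k$ is kept discrete, since its discreteness is exactly what produces the arithmetic sum $\sum_p p^{1/(1-\alpha)}$ and the resulting level repulsion. For each fixed $k$ I would apply a comparison-to-integral (Euler–Maclaurin/Abel) estimate, of the type furnished by the technical lemmas of this section, to the monotone map $m\mapsto\phi(N)\delta_k(m)$, replacing $\sum_m(f+\check f)(\phi(N)\delta_k(m))$ by $\int(f+\check f)(\phi(N)\delta_k(m))\,dm$ at the cost of a remainder controlled by $\|f\|_\infty$ and $\|f'\|_\infty$. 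In the integral I substitute $t=\phi(N)\delta_k(m)$ and use the expansion $\delta_k(m)=\alpha k\,m^{\alpha-1}\big(1+\bigO_\alpha(k/m)\big)$, whose Jacobian $\big|\frac{dm}{dt}\big|=\frac{1}{1-\alpha}(\phi(N)\alpha k)^{1/(1-\alpha)}t^{-(2-\alpha)/(1-\alpha)}(1+\smallo(1))$ yields, after the prefactor $\frac{\phi(N)}{N^{2-\alpha}}$ is absorbed using $\phi(N)\approx\lambda N^{1-\alpha}$, a per-gap contribution proportional to $k^{1/(1-\alpha)}\int(f+\check f)(t)\,(t/\lambda)^{-(2-\alpha)/(1-\alpha)}\,dt$. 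Summing over $k$ and enforcing $m\leq N$—equivalently $k\leq t/(\alpha\lambda)$, which supplies the cutoff $\lfloor t/(\alpha\lambda)\rfloor$—reconstitutes exactly the density $\rho_\alpha$ of Theorem~\ref{th:cv_correlations} (the two copies $f$ and $\check f$ accounting for the even extension of $\rho_\alpha$ to $\RR$).

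The remaining work is the error bookkeeping. The $\bigO_\alpha(1/N)$ term of the exotic estimate collects the Euler–Maclaurin/boundary remainder and the secondary factor $\bigO_\alpha(k/m)$ in $\delta_k(m)$, summed over the $\bigO_\alpha(A/\lambda)$ admissible gaps via $\sum_{k}k^{1/(1-\alpha)}\asymp(A/(\alpha\lambda))^{(2-\alpha)/(1-\alpha)}$, which is what generates the power $A^{(3-2\alpha)/(1-\alpha)}$ and the weights $\lambda^2\|f'\|_\infty+\lambda\|f\|_\infty$. The second term $A^2\lambda\|f\|_\infty\big|(\phi(N)/(\lambda N^{1-\alpha}))^{(2-\alpha)/(1-\alpha)}-1\big|$ isolates the discrepancy between the true scaling $\phi(N)$ and its surrogate $\lambda N^{1-\alpha}$, which survives precisely at the transition values $t\in\alpha\lambda\ZZ$ where a gap enters or leaves the admissible range; it vanishes when $\phi(N)=\lambda N^{1-\alpha}$ and tends to $0$ by hypothesis. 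The case $\lambda=0$ proceeds along the same lines, with the difference that no gap is now excluded and the outer $k$-sum itself becomes a Riemann sum: $\sum_{k=1}^{\lfloor t/(\alpha\lambda)\rfloor}k^{1/(1-\alpha)}\sim\frac{1-\alpha}{2-\alpha}(t/(\alpha\lambda))^{(2-\alpha)/(1-\alpha)}$ collapses $\rho_\alpha$ to the constant $\frac{1}{\alpha(2-\alpha)}$, and the three displayed contributions $\frac{\phi(N)}{N^{1-\alpha}}$, $\frac{1}{N^\alpha\phi(N)}$, $\frac1N$ quantify, respectively, the coarseness of this $k$-Riemann sum, the finite resolution at the smallest gaps and at the lower edge of the support, and the inner Euler–Maclaurin remainder.

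The main obstacle I anticipate is keeping every estimate uniform in $\alpha$ while respecting this asymmetric treatment of the two sums. The exponents $1/(1-\alpha)$ and $(2-\alpha)/(1-\alpha)$ blow up as $\alpha\to1$, so the constants hidden in $\bigO_\alpha$ must be propagated carefully through the expansion of $\delta_k(m)$ and through $\sum_k k^{1/(1-\alpha)}$, which in the exotic regime may never be replaced by an integral. The second delicate point is the behaviour near the jump points $t\in\alpha\lambda\ZZ-\{0\}$ of $\rho_\alpha$: there the $m$-window attached to a given gap degenerates to zero length, and the comparison-to-integral step must be carried out with a remainder that does not blow up, which is what ultimately forces the factor $\big|(\phi(N)/(\lambda N^{1-\alpha}))^{(2-\alpha)/(1-\alpha)}-1\big|$ in place of a cruder $\bigO_\alpha(1/N)$.
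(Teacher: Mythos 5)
Your overall strategy is sound, and for two of the three regimes it is essentially the paper's proof. The $\lambda=+\infty$ case is identical (your lower bound $N^\alpha-(N-1)^\alpha\geq\alpha N^{\alpha-1}$ is even slightly cleaner than the paper's bound $\phi(N)\alpha p(m+p)^{\alpha-1}\geq\alpha\phi(N)(2N)^{\alpha-1}$). In the exotic regime $\lambda\in\RR_+^*$, your ``keep the gap $k$ discrete, Riemann-sum in $m$ after substituting $t=\phi(N)\delta_k(m)$'' is exactly what the paper does, only organized differently: the paper first linearizes ($\delta_k(m)\approx\alpha k m^{\alpha-1}$, Lemma \ref{lem:linear_approx}), then pushes the measure forward under $h_\alpha^{-1}$, $x\mapsto x^{-1/(1-\alpha)}$, so that the $m$-atoms become an exact arithmetic progression to which Lemma \ref{lem:riemann_approx} applies; your Jacobian computation is the same change of variables done inside the integral. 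Your identification of the two error sources (the $A^{(3-2\alpha)/(1-\alpha)}/N$ term from the per-gap remainders summed over the $\bigO_\alpha(A/\lambda)$ admissible gaps, and the $|(\phi(N)/(\lambda N^{1-\alpha}))^{(2-\alpha)/(1-\alpha)}-1|$ term from gaps entering or leaving the admissible range near $t\in\alpha\lambda\ZZ$) matches the paper's bad-set analysis. Where you genuinely diverge is the regime $\lambda=0$: the paper there Riemann-sums in the \emph{gap} variable $p$ (step $\delta_{N,m}=\alpha\phi(N)/m^{1-\alpha}\to0$) and keeps the $m$-sum discrete, whereas you keep the gap discrete in all regimes and let the $k$-sum collapse to the constant. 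Your unified treatment is conceptually appealing, but the paper's choice is what makes the $\lambda=0$ bookkeeping painless, for the following reason.

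With your organization, after the inner Riemann sum the object you must compare with $\frac{1}{\alpha(2-\alpha)}\Leb_{\RR_+}(f)$ is $\int f\,\rho_{\alpha,\lambda_N}$, where $\lambda_N=\phi(N)/N^{1-\alpha}$ and $\rho_{\alpha,\lambda_N}$ is the exotic density at parameter $\lambda_N$. Pointwise one only has
\begin{equation*}
\Big|\rho_{\alpha,\lambda_N}(t)-\tfrac{1}{\alpha(2-\alpha)}\Big|=\bigO_\alpha\Big(\tfrac{\lambda_N}{t}\Big)\quad(t\gtrsim\lambda_N),
\end{equation*}
so the triangle inequality yields $\|f\|_\infty\,\bigO_\alpha\big(\lambda_N\log(A/\lambda_N)\big)$, which \emph{exceeds} the claimed budget $c_\alpha(\cdots)A^3\lambda_N$ by an unbounded logarithm: your statement that the ``coarseness of the $k$-Riemann sum'' costs $\phi(N)/N^{1-\alpha}$ is not obtained by termwise estimation. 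The log is spurious, but removing it requires an extra cancellation argument you do not mention: the jumps of $\rho_{\alpha,\lambda_N}$ overshoot and the decay between jumps undershoots the constant, and an Abel summation on $\sum_p p^{1/(1-\alpha)}$ shows that the primitive $\int_0^t(\rho_{\alpha,\lambda_N}-\frac{1}{\alpha(2-\alpha)})\,ds$ is $\bigO_\alpha(\lambda_N)$ \emph{uniformly} in $t\in[0,A]$; integrating by parts against $f'$ (this is where $f\in C^1_c$ is essential) then gives the clean bound $\bigO_\alpha(A\|f'\|_\infty\lambda_N)$. The paper avoids this entirely: its remaining discrete sum in the $\lambda=0$ regime is $\frac{1}{\alpha N^{2-\alpha}}\sum_{m\leq x_{N,t}}m^{1-\alpha}$, which has $\asymp N$ terms, hence relative error $\bigO(1/N+t/(\alpha N^\alpha\phi(N)))$ with no logarithm. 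So either adopt the paper's asymmetric choice for $\lambda=0$, or supply the summation-by-parts step; as written, your error accounting for that regime has a genuine (though reparable) hole.
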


\begin{remark}{\rm
An explicit constant $c_\alpha$ will be given at the end of the proof of Theorem \ref{th:effective_cv_correlations} in the case $\lambda=0$. The associated statement gives us a somehow weak control on the error term, as $\frac{\phi(N)}{N^{1-\alpha}}$ can go to zero very slowly. A similar remark applies to the statement regarding the case $\lambda\in\RR_+^*$, since $\frac{\phi(N)}{\lambda N^{1-\alpha}}$ can go to $1$ very slowly.
}
\end{remark}

The fact that Theorem \ref{th:effective_cv_correlations} implies Theorem \ref{th:cv_correlations} comes from the classical argument that one can pass from the convergence of regular measures on $C_c^1$ functions to all $C_c^0$ functions by density for the $\| \cdot \|_\infty$ norm. However, in the space $C_c^0$ we loose any kind of effectiveness as $(\| f'_n \|_\infty)_{n\in\NN}$ can explode along a sequence approximating a continuous function.

\subsection{Symmetry of the empirical pair correlation measures}
For the clarity of the proof, we begin with some practical lemmas. The first one uses the symmetry centered at $0$ of the measures $\R_N^\alpha$. In order to reduce the proof of Theorem \ref{th:effective_cv_correlations} to the asymptotic study of a sequence of measures on $\RR_+$, we define
$$ \R_N^{\alpha,+} = \frac{1}{\psi(N)} \sum_{1 \leq m < n \leq N} \Delta_{\phi(N)(n^\alpha - m^\alpha)} \quad \mbox{and} \quad \R_N^{\alpha,-} = \frac{1}{\psi(N)} \sum_{1 \leq n < m \leq N} \Delta_{\phi(N)(n^\alpha - m^\alpha)} $$
so that we have the decomposition $\R_N^\alpha = \R_N^{\alpha,+} + \R_N^{\alpha,-}$ and the inclusions of their support $\supp(\R_N^{\alpha,+}) \subset \RR_+^*$ and $\supp(\R_N^{\alpha,-}) \subset -\RR_+^*$.

\begin{lemma}\label{lem:correl_symmetry}
	We assume that $\frac{\phi(N)}{N^{1-\alpha}} \underset{N\to\infty}{\longrightarrow} \lambda \in [0,+\infty]$ and for every $N \in \NN$, we set $\psi(N) = \frac{N^{2-\alpha}}{\phi(N)}$. Let $f \in C_c^1(\RR)$ and $A>1$ such that $\supp f \subset [-A,A]$. Set $\check{f}:t\mapsto f(-t)$. Let $F:\NN \to \RR_+$ be a function possibly depending on the parameters $\alpha$, $\phi$, $\|f\|_\infty$, $\|f'\|_\infty$, $A$ and $\lambda$. We assume there exists a real number $c_\alpha>0$, only depending on $\alpha$, and a integer $N_0\in\NN$ such that, for all $N \geq N_0$,
	$$ \big|\R_N^{\alpha,+}(f) - \11_{\RR_+} \rho_\alpha \Leb_\RR(f)\big| \leq \frac{c_\alpha}{2} F(N) \,\mbox{ and }\, \big|\R_N^{\alpha,+}(\check{f}) - \11_{\RR_+} \rho_\alpha \Leb_\RR(\check{f})\big| \leq \frac{c_\alpha}{2} F(N).$$
	Then, for all $N \geq N_0$, we have the inequality
	$$ \big|\R_N^\alpha(f) - \rho_\alpha \Leb_\RR(f)\big| \leq c_\alpha F(N).$$
\end{lemma}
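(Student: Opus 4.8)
The plan is to exploit the reflection symmetry relating $\R_N^{\alpha,-}$ to $\R_N^{\alpha,+}$, together with the evenness of $\rho_\alpha$ (clear from the defining formula in Theorem~\ref{th:cv_correlations}, which depends on $t$ only through $|t|$). First I would establish the key identity $\R_N^{\alpha,-}(g) = \R_N^{\alpha,+}(\check g)$ for every bounded measurable $g$. This follows by swapping the summation labels $n$ and $m$ in the defining sum of $\R_N^{\alpha,-}$: for $1\leq n<m\leq N$ one has $n^\alpha - m^\alpha = -(m^\alpha - n^\alpha)$ with $m>n$, so the Dirac mass at $\phi(N)(n^\alpha-m^\alpha)$ is carried to the Dirac mass at $-\phi(N)(m^\alpha-n^\alpha)$; in other words, $\R_N^{\alpha,-}$ is the image of $\R_N^{\alpha,+}$ under $t\mapsto -t$. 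Combined with the decomposition $\R_N^\alpha = \R_N^{\alpha,+}+\R_N^{\alpha,-}$ recalled just before the lemma, this gives $\R_N^\alpha(f) = \R_N^{\alpha,+}(f) + \R_N^{\alpha,+}(\check f)$.

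Next I would carry out the analogous splitting on the limiting side. Since $\rho_\alpha$ is even, the change of variables $t\mapsto -t$ on the negative half-line yields $\int_{\RR_-}\rho_\alpha\, f\,d\Leb_\RR = \int_{\RR_+}\rho_\alpha\,\check f\,d\Leb_\RR$, so that, the single point $0$ being Lebesgue-negligible,
\[
\rho_\alpha\Leb_\RR(f) = \11_{\RR_+}\rho_\alpha\Leb_\RR(f) + \11_{\RR_+}\rho_\alpha\Leb_\RR(\check f).
\]
Subtracting this from the decomposition of $\R_N^\alpha(f)$ term by term produces
\[
\R_N^\alpha(f) - \rho_\alpha\Leb_\RR(f) = \bigl(\R_N^{\alpha,+}(f) - \11_{\RR_+}\rho_\alpha\Leb_\RR(f)\bigr) + \bigl(\R_N^{\alpha,+}(\check f) - \11_{\RR_+}\rho_\alpha\Leb_\RR(\check f)\bigr).
\]

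Finally I would apply the triangle inequality and invoke the two hypotheses of the lemma. The one point requiring care before doing so is to confirm that $\check f$ is a legitimate input for the second hypothesis: indeed $\check f\in C_c^1(\RR)$ with $\supp\check f = -\supp f \subset [-A,A]$, because the interval $[-A,A]$ is symmetric, and moreover $\|\check f\|_\infty = \|f\|_\infty$ and $\|\check f'\|_\infty = \|f'\|_\infty$. Hence the same function $F$ and the same threshold $N_0$ control both error terms, and each of the two brackets above is bounded by $\tfrac{c_\alpha}{2}F(N)$ for all $N\geq N_0$. Adding these bounds gives $\bigl|\R_N^\alpha(f)-\rho_\alpha\Leb_\RR(f)\bigr|\leq c_\alpha F(N)$, as claimed. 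There is no genuine obstacle in this argument; the only delicate steps are the bookkeeping in the relabeling identity $\R_N^{\alpha,-}(g)=\R_N^{\alpha,+}(\check g)$ and the verification that $\check f$ inherits the support and norm data of $f$, so that the hypothesis on $\check f$ applies verbatim.
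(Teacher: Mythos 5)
Your proposal is correct and follows essentially the same route as the paper: the same reflection identity $\R_N^{\alpha,-}=(t\mapsto -t)_*\R_N^{\alpha,+}$ (equivalently $\R_N^{\alpha,-}(g)=\R_N^{\alpha,+}(\check g)$), the evenness of $\rho_\alpha$ to split $\rho_\alpha\Leb_\RR$ over the two half-lines, and the triangle inequality combining the two hypothesized bounds; your remark that $\check f$ carries the same support and norm data corresponds to the paper's observation that the parameters of $F$ are invariant under the reflection. No gap.
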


\begin{proof}
	Using the symmetry $\R_N^{\alpha,-} = (t \mapsto -t)_* \R_N^{\alpha,+}$, the invariance of the parameters of $F$ under this change of variable, and the fact that $\rho_\alpha$ is even, we have the inequality, for all $N\geq N_0$,
	 \begin{align*}
	 	\big|\R_N^{\alpha,-}(f) - \11_{\RR_-} \rho_\alpha \Leb_\RR(f)\big| & = \big|(t \mapsto -t)_*(\R_N^{\alpha,+} - \11_{\RR_+} \rho_\alpha \Leb_\RR )(f)\big| \\
	 	& = \big|\R_N^{\alpha,+}(\check{f}) - \11_{\RR_+} \rho_\alpha \Leb_\RR(\check{f})\big| \leq c_\alpha F(N).
	 \end{align*}
	The result then follows from the triangle inequality.
\end{proof}

\subsection{Linear approximation}
The second lemma is a linear approximation process. Indeed, we will be able to approximate the re-written expression $\R_N^{\alpha,+} = \frac{1}{\psi(N)} \sum_{m=1}^{N-1} \sum_{p=1}^{N-m} \Delta_{\phi(N)\left((m+p)^\alpha - m^\alpha\right)}$ by the positive measure on $\RR_+$ defined by
$$ \mu_N^+ = \frac{1}{\psi(N)} \sum_{m=1}^{N-1} \sum_{p=1}^{N-m} \Delta_{\phi(N)\frac{\alpha p }{m^{1-\alpha}}}.$$

\begin{lemma}\label{lem:linear_approx}
	Let $f\in C_c^1(\RR)$ and choose $A>1$ such that $\supp{f} \subset [-A,A]$. Let $N \in \NN^*$. We assume that $N$ is large enough so that $\phi(N) > \frac{A}{2^\alpha-1}$. Then there exists a positive constant $c_\alpha'>0$ depending only on $\alpha$, such that
	$$|\R_N^{\alpha,+}(f) - \mu_N^+(f)| \leq c_\alpha' A^3 \|f'\|_\infty \frac{N^{2(1-\alpha)}}{\psi(N) \phi(N)^2}.$$
\end{lemma}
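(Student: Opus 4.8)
The plan is to estimate the difference $|\R_N^{\alpha,+}(f)-\mu_N^+(f)|$ by comparing the two sums term by term. Both measures are indexed by the same pairs $(m,p)$ with $1\leq m\leq N-1$ and $1\leq p\leq N-m$, so the difference is
$$
\R_N^{\alpha,+}(f)-\mu_N^+(f) = \frac{1}{\psi(N)}\sum_{m=1}^{N-1}\sum_{p=1}^{N-m}\Big(f\big(\phi(N)((m+p)^\alpha-m^\alpha)\big) - f\big(\tfrac{\phi(N)\alpha p}{m^{1-\alpha}}\big)\Big).
$$
The key analytic input is the first-order Taylor estimate for $x\mapsto x^\alpha$: writing $(m+p)^\alpha-m^\alpha = m^\alpha((1+\tfrac{p}{m})^\alpha-1)$, the linear term is exactly $\tfrac{\alpha p}{m^{1-\alpha}}$, and the remainder is controlled by the second derivative. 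Concretely I would show $\big|(m+p)^\alpha-m^\alpha - \tfrac{\alpha p}{m^{1-\alpha}}\big|\leq c_\alpha\,\tfrac{p^2}{m^{2-\alpha}}$ for a constant depending only on $\alpha$, using $\tfrac{\alpha(1-\alpha)}{2}$ as the Taylor bound on $[m,m+p]$ (the concavity of $x^\alpha$ makes the remainder negative, but only the absolute size matters).

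\textbf{Applying the mean value theorem to $f$.}
Since $f\in C_c^1$, I would then write the difference of the two values of $f$ as $\|f'\|_\infty$ times the difference of the arguments, i.e.
$$
\Big|f\big(\phi(N)((m+p)^\alpha-m^\alpha)\big)-f\big(\tfrac{\phi(N)\alpha p}{m^{1-\alpha}}\big)\Big| \leq \|f'\|_\infty\,\phi(N)\,\Big|(m+p)^\alpha-m^\alpha-\tfrac{\alpha p}{m^{1-\alpha}}\Big| \leq c_\alpha\|f'\|_\infty\,\phi(N)\,\frac{p^2}{m^{2-\alpha}}.
$$
This converts the problem into bounding $\tfrac{\phi(N)\|f'\|_\infty}{\psi(N)}\sum_{m,p}\tfrac{p^2}{m^{2-\alpha}}$ over the relevant index range. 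The summand is only nonzero (up to the $c_\alpha$ factor) when at least one of the two $f$-arguments lies in $[-A,A]$; this is where the hypothesis $\supp f\subset[-A,A]$ and the smallness condition $\phi(N)>\tfrac{A}{2^\alpha-1}$ enter, and it is what forces the support restriction that keeps $p$ small relative to $m$.

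\textbf{The support restriction and the main obstacle.}
The crux is that the full double sum $\sum_{m,p}\tfrac{p^2}{m^{2-\alpha}}$ is far too large; I must use the support of $f$ to restrict the range of $p$ for each $m$. For a term to contribute, the linear argument $\tfrac{\phi(N)\alpha p}{m^{1-\alpha}}$ must be $\OOO(A)$ (after checking, via the condition on $\phi(N)$, that the nonlinear argument being in $[-A,A]$ forces the linear one into a comparable range, and vice versa). This yields a cutoff of the shape $p\lesssim \tfrac{A\,m^{1-\alpha}}{\alpha\,\phi(N)}$. Summing $p^2$ up to this cutoff gives roughly $\tfrac{A^3 m^{3(1-\alpha)}}{\phi(N)^3}$, and then summing $\tfrac{1}{m^{2-\alpha}}\cdot m^{3(1-\alpha)}=m^{1-2\alpha}$ over $m\leq N$ produces a factor $N^{2(1-\alpha)}$. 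Tracking the prefactor $\tfrac{\phi(N)\|f'\|_\infty}{\psi(N)}$ through this, together with the three inverse powers of $\phi(N)$ from the cutoff, leaves exactly $c_\alpha' A^3\|f'\|_\infty\,\tfrac{N^{2(1-\alpha)}}{\psi(N)\phi(N)^2}$, as claimed. The main obstacle, and where I expect the condition $\phi(N)>\tfrac{A}{2^\alpha-1}$ to be indispensable, is establishing rigorously that the nonlinear and linear arguments have comparable supports: when $p$ is as large as $N-m$ the Taylor approximation is crude, so I must verify that such large-$p$ terms have both arguments outside $[-A,A]$ (hence contribute nothing) precisely because $\phi(N)$ is large enough that even the smallest gap $(m+1)^\alpha-m^\alpha$, scaled by $\phi(N)$, already lives near the support scale. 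Handling the boundary carefully so that no contributing term is accidentally discarded, while still obtaining the clean cutoff $p\lesssim A m^{1-\alpha}/\phi(N)$, is the delicate part of the argument.
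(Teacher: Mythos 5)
Your proposal is correct and follows essentially the same route as the paper's proof: the mean value inequality for $f$ combined with the Taylor--Lagrange bound for $x\mapsto(1+x)^\alpha$ reduces the problem to bounding $\sum p^2/m^{2-\alpha}$, and the support restriction (both the linear cutoff $p\leq \frac{Am^{1-\alpha}}{\alpha\phi(N)}$ and the nonlinear one, made comparable by the hypothesis $\phi(N)>\frac{A}{2^\alpha-1}$ via the mean value inequality for $x\mapsto(1+x)^{1/\alpha}$) truncates $p$ at the scale $Am^{1-\alpha}/\phi(N)$, after which summing $m^{1-2\alpha}$ over $m\leq N$ gives the stated $A^3\|f'\|_\infty N^{2(1-\alpha)}/(\psi(N)\phi(N)^2)$. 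The only cosmetic difference is that the paper bounds the truncated double sum by comparison with an explicit integral $J_N$ carrying the maximum of the two cutoffs (which is why it needs $p_{\max}(N,1)=0$, again from the same hypothesis), whereas you reduce to a single comparable cutoff and sum directly; both executions yield the same estimate.
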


\begin{remark}\label{rmk:linear_approx}{\rm
	In the case at hand, we will assume that the renormalization factor is linked to the scaling factor by the formula $\psi(N)=\frac{N^{2-\alpha}}{\phi(N)}$. The inequality in Lemma \ref{lem:linear_approx} thus becomes
	$$|\R_N^{\alpha,+}(f) - \mu_N^+(f)| \leq c_\alpha' A^3 \|f'\|_\infty \frac{1}{N^\alpha \phi(N)}.$$}
\end{remark}

\begin{proof}
	For all $a,b \in \ZZ$, we use the double brackets notation $\intbrackets*{a,b}=\{a,a+1,\ldots,b\}$ for the interval of integers between $a$ and $b$. Let $N \geq 2$ (for $N=1$, we have $\R_1^{\alpha,+}=\mu_1^+=0$). For $m \in \llbracket 1, N-1 \rrbracket$ and $p\in \llbracket 1, N-m \rrbracket$, we want to bound from above the quantity $$\big|f\big(\phi(N)((m+p)^\alpha-m^\alpha)\big)-f\big(\phi(N)\frac{\alpha p}{m^{1-\alpha}}\big)\big|.$$
	To do so, we first observe that a bound on the contributing $p$ arises from the fact that the function $f$ is compactly supported. Indeed, for all $m \in \llbracket 1, N-1 \rrbracket$ and $p\in \llbracket 1, N-m \rrbracket$, we have the equivalences
	$$ \begin{array}{rcl}
		\phi(N) \frac{\alpha p}{m^{1-\alpha}} \leq A & \iff & p \leq \frac{A m^{1-\alpha}}{\alpha \phi(N)} \\
		\mbox{ and  } \phi(N)((m+p)^\alpha - m^\alpha) \leq A & \iff & p \leq (\frac{A}{\phi(N)}+m^\alpha)^{\frac{1}{\alpha}}-m.
	\end{array}$$
	Thus we set some bound for $p$ in this proof by defining the function
	\begin{equation}\label{eq:def_pmax} p_{\max} : (N,m) \mapsto \Big\lfloor \max\Big\{ \frac{A m^{1-\alpha}}{\alpha\phi(N)}, \, \big( \frac{A}{\phi(N)}+m^\alpha \big)^{\frac{1}{\alpha}} - m \Big\} \Big\rfloor
	\end{equation}
	and we denote by $I_N$ the set of indices $(m,p)$ respecting that bound on $p$, that is to say $I_N = \{ (m,p) \in \NN^2 \, : \, 1 \leq m \leq N-1, \, 1 \leq p \leq p_{\max}(N,m) \}.$
	
	Applying the mean value inequality to $f$ and the Taylor-Lagrange inequality to the function $x \mapsto (1+x)^\alpha$, we obtain the following inequality, for all $(m,p) \in I_N$,
	\begin{align*}
	\big|f(\phi(N)\left((m+p)^\alpha-m^\alpha)\right)-f\big(\phi(N)\frac{\alpha p}{m^{1-\alpha}}\big)\big| & \leq \|f'\|_\infty \phi(N) m^\alpha \big| \big(1+\frac{p}{m}\big)^\alpha - 1 - \frac{\alpha p}{m} \big| \\
	& \leq \|f'\|_\infty \phi(N) \frac{\alpha(1-\alpha)}{2} \frac{p^2}{m^{2-\alpha}}. \numberthis\label{eq:taylor_approx_m,p}
	\end{align*} 
	
	Our goal is then to bound from above the sum $\sum_{(m,p)\in I_N} \frac{p^2}{m^{2-\alpha}}$. For that purpose, we use some integral comparison. We extend $p_{\max}$ to $\NN^* \times \RR_+$ still using the expression \eqref{eq:def_pmax}. We will compare the above sum to the integral defined by
	$$ J_N = \int_{x=1}^{N-1} \int_{y=1}^{p_{\max}(N,x+1)+1} \frac{y^2}{x^{2-\alpha}} \, dy dx.$$
	
	To justify the comparison, we begin with a unit square. The variations in each variable of the integrand function provide the inequality, for every $m,p \in \NN^*$,
	$$
	\int_{[m,m+1] \times [p,p+1]} \frac{y^2}{x^{2-\alpha}} \, dy dx \geq \frac{p^2}{(m+1)^{2-\alpha}}.
	$$
	We can bound from below the integral $J_N$ by the sum of integrals on the unit squares under the graph of the nondecreasing function $p_{\max}(N,\cdot)+1$. We thus obtain
	\begin{align*}
	\int_{x=1}^{N-1} \int_{y=1}^{p_{\max}(N,x+1)+1} \frac{y^2}{x^{2-\alpha}} \, dy dx & \geq \sum_{m=1}^{N-2} \; \sum_{p=1}^{p_{\max}(N,m+1)} \int_{[m,m+1] \times [p,p+1]} \frac{y^2}{x^{2-\alpha}} \, dy dx \\
	& \geq \sum_{m=1}^{N-2} \; \sum_{p=1}^{p_{\max}(N,m+1)} \frac{p^2}{(m+1)^{2-\alpha}} \\
	& = \sum_{(m,p) \in I_N} \frac{p^2}{m^{2-\alpha}} - \sum_{p=1}^{p_{\max}(N,1)} p^2.
	\end{align*}
	As $\phi \underset{\infty}{\longrightarrow}+\infty$, the definition of $p_{\max}$ indicates that $p_{\max}(N,1)=0$ for $N$ large enough. More precisely, it is the case if we have both inequalities $\frac{A}{\alpha \phi(N)}<1$ and $( \frac{A}{\phi(N)}-1)^{\frac{1}{\alpha}}-1<1$, or equivalently if we have $\phi(N) > A \max\big\{\frac{1}{\alpha}, \frac{1}{2^\alpha -1}\big\}= \frac{A}{2^\alpha-1}$ which is one of our assumptions. Hence we have the inequality $\sum_{(m,p)\in I_N} \frac{p^2}{m^{2-\alpha}} \leq J_N$.
	
	It remains to evaluate the integral $J_N$. Using the facts that for all $x \geq 1$ (or $x=0$), we have the inequality $(x+1)^3-1 \leq 2^3 x^3$ and by observing that $p_{\max}(N,\cdot)$ is integer-valued, we obtain the following sequence of inequalities
	\begin{align*}
			J_N & =  \int_1^{N-1} \int_1^{p_{\max}(N,x+1)+1} \frac{y^2}{x^{2-\alpha}} \, dy dx = \frac{1}{3} \int_1^{N-1} \frac{(p_{\max}(N,x+1)+1)^3-1 }{x^{2-\alpha}} \, dx  \\[1.5mm]
			& \leq \frac{2^3}{3} \int_{1}^{N-1} \frac{p_{\max}(N,x+1)^3}{x^{2-\alpha}} \, dx \\[1.5mm]
			& \leq \frac{8}{3} \int_{1}^{N-1} \frac{\max\big\{ \frac{A(x+1)^{1-\alpha}}{\alpha \phi(N)}, (\frac{A}{\phi(N)}+(x+1)^\alpha)^{\frac{1}{\alpha}}-(x+1) \big\}^3}{x^{2-\alpha}} \, dx \\[1.5mm]
			& \leq \frac{8}{3} \frac{2^{3(1-\alpha)}A^3}{\alpha^3 \phi(N)^3} \int_1^{N-1} x^{1-2\alpha} \, dx + \frac{8}{3} \int_1^{N-1} \frac{(x+1)^3 \big((\frac{A}{\phi(N)(x+1)^\alpha}+1)^{\frac{1}{\alpha}}-1 \big)^3}{x^{2-\alpha}} \, dx \\[1.5mm]
			& \leq \frac{8.2^{3(1-\alpha)}A^3}{ 6(1-\alpha)\alpha^3} \frac{N^{2(1-\alpha)}}{\phi(N)^3} + \frac{8}{3} J_N',
	\end{align*}
	where $J_N'$ is the last integral on the previous line. Using the mean value inequality for the map $x \mapsto (1+x)^\frac{1}{\alpha}$ (whose derivative is increasing) between $0$ and $\frac{A}{\phi(N)(x+1)^\alpha}$, and the inequality $\frac{A}{\phi(N)} \leq 1$ (coming from our assumption $\phi(N) > \frac{A}{2^\alpha-1}$), the remaining integral $J_N'$ can be bounded as follows
	\begin{align*}
		J_N' & \leq \int_1^{N-1} \frac{(x+1)^3 \big(	\frac{1}{\alpha} (1+\frac{A}{\phi(N)(x+1)^\alpha})^{\frac{1}{\alpha}-1}\frac{A}{\phi(N)(x+1)^\alpha} \big)^3 }{x^{2-\alpha}} \, dx \\[1.5mm]
		& \leq 	\frac{2^{\frac{1}{\alpha}-1}A^3}{\alpha^3} \frac{1}{\phi(N)^3} \int_1^{N-1} \frac{(x+1)^{3(1-\alpha)}}{x^{2-\alpha}} \, dx \\[1.5mm]
		& \leq \frac{2^{\frac{1}{\alpha}+2-3\alpha}A^3}{\alpha^3} \frac{1}{\phi(N)^3} \int_1^{N-1} x^{1-2\alpha} \, dx \leq 	\frac{2^{\frac{1}{\alpha}+2-3\alpha}A^3}{\alpha^3} \frac{N^{2(1-\alpha)}}{\phi(N)^3}.
	\end{align*}
	Combining this integral approximation with our inequality \eqref{eq:taylor_approx_m,p}, we finally obtain the inequality
	\begin{align*}
		& |\R_N^{\alpha,+}(f) - \mu_N^+(f)| \\[1.5mm]
		\leq & \frac{\alpha(1-\alpha)}{2} \|f'\|_\infty \frac{\phi(N)}{\psi(N)} \sum_{(m,p) \in I_N} \frac{p^2}{m^{2-\alpha}}
		\leq \frac{\alpha(1-\alpha)}{2} \|f'\|_\infty 	\frac{\phi(N)}{\psi(N)} J_N \\[1.5mm]
		\leq & \frac{\alpha(1-\alpha)}{2} \|f'\|_\infty 	\frac{\phi(N)}{\psi(N)} \Big( \frac{8\cdot2^{3(1-\alpha)}A^3}{6(1-\alpha)\alpha^3} \frac{N^{2(1-\alpha)}}{\phi(N)^3} + \frac{8\cdot 2^{\frac{1}{\alpha}+2-3\alpha}A^3}{3\alpha^3} \frac{N^{2(1-\alpha)}}{\phi(N)^3} \Big) \\[1.5mm]
		\leq & c_\alpha' A^3 \|f'\|_\infty 	\frac{N^{2(1-\alpha)}}{\psi(N) \phi(N)^2}
	\end{align*}
	where $c_\alpha'=\frac{2^{1+3(1-\alpha)}+2^{\frac{1}{\alpha}+4-3\alpha}(1-\alpha)}{3\alpha^2} \leq \frac{32}{3}\frac{2^\frac{1}{\alpha}}{\alpha^2}$.
\end{proof}

\subsection{Riemann sum approximation for compactly supported functions}

Finally, the third lemma is a practical quite standard version of the Riemann sum approximation with estimate of the error term which is suitable for compactly supported $C^1$ functions.

\begin{lemma}\label{lem:riemann_approx}
	Let $f \in C_c^1(\RR)$ and choose $B \geq 0$ such that $\supp{f} \subset [-B,B]$. Let $\delta >0$ and $M \in \NN^*$. Then
	$$\Big| \int_0^{M\delta} f(t)dt - \delta \sum_{p=1}^M f(p\delta) \Big| \leq \frac{\|f'\|_\infty}{2} \delta \min\{B,M\delta\}.$$
\end{lemma}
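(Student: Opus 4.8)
The plan is to recognize $\delta \sum_{p=1}^M f(p\delta)$ as a right-endpoint Riemann sum for $\int_0^{M\delta} f(t)\,dt$ and to control the error one subinterval at a time. On the interval $[(p-1)\delta,\,p\delta]$ I would write
$$\int_{(p-1)\delta}^{p\delta} f(t)\,dt - \delta f(p\delta) = \int_{(p-1)\delta}^{p\delta}\bigl(f(t) - f(p\delta)\bigr)\,dt,$$
and bound the integrand by the mean value inequality, $|f(t) - f(p\delta)| \le \|f'\|_\infty\,(p\delta - t)$ for $t \in [(p-1)\delta, p\delta]$. Integrating the weight $p\delta - t$ over the subinterval gives $\delta^2/2$, so each full subinterval contributes at most $\|f'\|_\infty\,\delta^2/2$ to the total error.

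If $M\delta \le B$, I would simply sum this estimate over all $M$ subintervals, which yields $\frac{\|f'\|_\infty}{2}\,\delta\,(M\delta)$; since $\min\{B, M\delta\} = M\delta$ in this regime, this already is the claim.

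The remaining case $M\delta > B$ is where the compact support must be exploited to replace $M\delta$ by $B$. Setting $M_0 = \lfloor B/\delta\rfloor$, every term with $p > M_0$ has $f(p\delta) = 0$ because $p\delta > B$, and moreover $f$ vanishes on $(B,\infty)$; hence the error collapses to the first $M_0$ full subintervals plus the single leftover piece $\int_{M_0\delta}^{B} f(t)\,dt$. The $M_0$ full pieces contribute at most $M_0\cdot\|f'\|_\infty\,\delta^2/2$. For the leftover piece I would use that $f(B) = \lim_{t\to B^+} f(t) = 0$ (by continuity, since $f$ vanishes on $(B,\infty)$), so that $|f(t)| = |f(t)-f(B)| \le \|f'\|_\infty\,(B - t)$ on $[M_0\delta, B]$ and therefore
$$\Bigl|\int_{M_0\delta}^{B} f(t)\,dt\Bigr| \le \|f'\|_\infty\,\frac{(B - M_0\delta)^2}{2} \le \|f'\|_\infty\,\frac{\delta\,(B - M_0\delta)}{2},$$
using $B - M_0\delta < \delta$. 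Adding the two contributions makes the bracketed lengths telescope, $M_0\delta + (B - M_0\delta) = B$, giving exactly $\frac{\|f'\|_\infty}{2}\,\delta B = \frac{\|f'\|_\infty}{2}\,\delta\min\{B,M\delta\}$.

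I expect the main obstacle to be obtaining the sharp constant together with the $\min$, rather than the per-interval estimate, which is routine. A crude count of the subintervals meeting $\supp f$ would produce $\lfloor B/\delta\rfloor + 1$ active intervals and hence an error of order $\frac{\|f'\|_\infty}{2}\,\delta\,(B + \delta)$, which overshoots the target. The finer treatment of the final partial subinterval through $f(B) = 0$ is precisely what removes the spurious additive $\delta$ and closes the estimate with the stated constant.
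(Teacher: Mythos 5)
Your proof is correct and follows essentially the same route as the paper's: the same per-subinterval mean value estimate with right endpoints, the same case split on $M\delta \leq B$ versus $M\delta > B$, and in the latter case the same reduction to $\lfloor B/\delta\rfloor$ full subintervals plus a partial piece handled via $f(B)=0$ and $(B-\lfloor B/\delta\rfloor\delta)^2 \leq \delta(B-\lfloor B/\delta\rfloor\delta)$. No gaps; the argument matches the paper's proof step for step.
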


\begin{proof}
	Assume that $M\delta \leq B$. By the triangle and mean value inequalities, we thus have, for all $p \in \llbracket 1,M \rrbracket$, 
	$$
	\Big| \int_{(p-1)\delta}^{p\delta} f(t)dt - \delta f(p\delta) \Big| \leq \int_{(p-1)\delta}^{p\delta} |f(t) - f(p\delta)| dt \leq \int_{(p-1)\delta}^{p\delta} \|f'\|_\infty (p\delta-t)dt = \|f'\|_\infty \frac{\delta^2}{2}.
	$$
	By summing over $p \in \llbracket 1,M \rrbracket$ and using the triangle inequality, the lemma is proved in the case $M\delta \leq B$.
	
	Now let us assume that $M\delta > B$. The quantity we want to evaluate can be written
	$$\Big| \int_0^{B} f(t)dt - \delta \sum_{p=1}^{\floor*{\frac{B}{\delta}}} f(p\delta) \Big|.$$
	The case we first proved thus yields the inequality
	\begin{equation}\label{eq:riemann_approx_1}
	\Big| \int_0^{\delta \floor*{\frac{B}{\delta}}} f(t)dt - \delta \sum_{p=1}^{\floor*{\frac{B}{\delta}}} f(p\delta) \Big| \leq \frac{\|f'\|_\infty}{2}\delta^2 \Big\lfloor \frac{B}{\delta} \Big\rfloor.
	\end{equation}
	For the remaining part of the integral, we use once again the triangle and mean value inequalities and obtain
	\begin{equation}\label{eq:riemann_approx_2}
		 \Big| \int_{\delta \floor*{\frac{B}{\delta}}}^B f(t)dt \Big| \leq \int_{\delta \floor*{\frac{B}{\delta}}}^B |f(t)-f(B)| dt
		 \leq \frac{\|f'\|_\infty}{2}\Big(B-\delta \Big\lfloor \frac{B}{\delta} \Big\rfloor \Big)^2
	\end{equation}
	Summing both inequalities \eqref{eq:riemann_approx_1} and \eqref{eq:riemann_approx_2}, we get
	\begin{align*}
		\Big| \int_0^{B} f(t)dt - \delta \sum_{p=1}^{\floor*{\frac{B}{\delta}}} f(p\delta) \Big| & \leq \frac{\|f'\|_\infty}{2} \delta^2 \Big( \Big\lfloor \frac{B}{\delta} \Big\rfloor + \Big(\frac{B}{\delta} -  \Big\lfloor \frac{B}{\delta} \Big\rfloor \Big)^2 \Big) \\
		& \leq \frac{\|f'\|_\infty}{2} \delta^2 \Big( \Big\lfloor \frac{B}{\delta} \Big\rfloor + \Big(\frac{B}{\delta} -  \Big\lfloor \frac{B}{\delta} \Big\rfloor\Big) \Big) = \frac{\|f'\|_\infty}{2} \delta B.
	\end{align*}
This concludes the proof of the Lemma \ref{lem:riemann_approx}.
\end{proof}

\section{Proof of Theorem \ref{th:effective_cv_correlations}}\label{sec:proof_thm}
We now have the tools to prove our theorem.
As we are studying three regimes for the scaling factor $\phi$ that are completely different in terms of behavior of the sequence $(\R_N^\alpha)_{N\in\NN}$, the proof will be divided accordingly. Recall that we impose, for all $N \in \NN$, that the renormalization factor is $\psi(N)=\frac{N^{2-\alpha}}{\phi(N)}$, even though it has no importance in the first regime. By Lemma \ref{lem:correl_symmetry}, we only need to study the effective behavior of the positive part of our pair correlation measures, which is defined by
$$\R_N^{\alpha,+}=\frac{1}{\psi(N)} \sum_{m=1}^{N-1} \sum_{p=1}^{N-m} \Delta_{\phi(N)((m+p)^\alpha-m^\alpha)}.$$
Let $f \in C_c^1(\RR)$ and choose $A > 1$ such that $\supp{f} \subset [-A, A]$.
	\medskip

\subsection[regime +infini]{Regime $\frac{\phi(N)}{N^{1-\alpha}} \underset{N \to \infty}{\longrightarrow}+\infty$}
In this first case, we want to show the vague convergence towards $0$. For all $x \geq 0$, we have the inequality
$$(1+x)^\alpha - 1 = \int_0^x \alpha (1+t)^{\alpha-1}dt \geq \frac{\alpha x}{(1+x)^{1-\alpha}}.$$
Consequently, for all $N\in\NN^*$, all $m\in \llbracket 1,N-1 \rrbracket$ and all $p\in\llbracket 1, N-m \rrbracket$, we obtain
\begin{align*}
	\phi(N)((m+p)^\alpha-m^\alpha) & = \phi(N) m^\alpha \big(\big(1+\frac{p}{m}\big)^\alpha-1\big) \\
	& \geq \phi(N) m^\alpha \frac{\alpha \frac{p}{m}}{(1+\frac{p}{m})^{1-\alpha}} = \phi(N) \frac{\alpha p}{(m+p)^{1-\alpha}} \geq \frac{\alpha \phi(N)}{(2N)^{1-\alpha}}. \numberthis\label{eq:empirical_level_repulsion}
\end{align*}
	One can notice that we have not yet used any assumption on $\phi$ (other than its positivity). If $N$ is large enough so that $\frac{\alpha \phi(N)}{(2N)^{1-\alpha}} > A$, Equation \eqref{eq:empirical_level_repulsion} yields the equality $\R_N^{\alpha,+}(f)=0$ (in fact, independently on the choice of the renormalization factor $\psi$). That concludes the proof in the first case.
	\medskip
	
\subsection[regime 0]{Regime $\frac{\phi(N)}{N^{1-\alpha}} \underset{N\to\infty}{\longrightarrow}0$}
	
Our goal is to show the asymptotic Poissonian behaviour of $(\R_N^{\alpha,+})_{N\in\NN}$, with the speed of convergence described in Theorem \ref{th:effective_cv_correlations}. By Lemma \ref{lem:linear_approx} (more precisely, by the Remark \ref{rmk:linear_approx} following it), it suffices to prove the same result for $(\mu_N^+)_{N\in\NN}$ instead. As we want to show some convergence towards a measure absolutely continuous with respect to the Lebesgue measure $\Leb_{\RR_+}$ on $\RR_+$, we will use a Riemann sum approximation of the sums defining the measures $\mu_N^+$ and thus compare them to integrals. For that matter, for all $N\in\NN-\{0\}$ and $m\in\llbracket 1,N-1 \rrbracket$, we set $$\delta_{N,m}=\frac{\alpha\phi(N)}{m^{1-\alpha}},$$ corresponding to the step appearing in the second sum defining $\mu_N^+$. Let $N\in\NN-\{0\}$. We define the positive measure $\nu_N^+$ on $\RR_+$ by
$$\nu_N^+ = \frac{1}{\psi(N)} \sum_{m=1}^{N-1} \frac{1}{\delta_{N,m}} \11_{[0,(N-m)\delta_{N,m}]} \Leb_{\RR_+}.$$
Lemma \ref{lem:riemann_approx} with $B=A$, $M=N-m$ and $\delta=\delta_{N,m}$ grants us the inequality
\begin{align*}
	|\mu_N^+(f)-\nu_N^+(f)| & = \frac{1}{\psi(N)} \Big| \sum_{m=1}^{N-1} \frac{1}{\delta_{N,m}} \Big( \delta_{N,m} \sum_{p=1}^{(N-m)} f(p\delta_{N,m}) - \int_0^{(N-m)\delta_{N,m}} f(t)dt \Big) \Big| \\
	& \leq \frac{1}{\psi(N)} \sum_{m=1}^{N-1} \frac{1}{\delta_{N,m}} \frac{\|f'\|_\infty}{2} \delta_{N,m} \min\{A,(N-m)\delta_{N,m}\}. \numberthis\label{eq:error_mu_N_nu_N_intermediate}
\end{align*}
In order to evaluate the above sum of such minima, we use the following equivalence, for all $m\in\llbracket 1,N-1 \rrbracket$,
$$ (N-m)\delta_{N,m} \leq A \iff g_N(m) \leq 0 \mbox{ where } g_N : x \mapsto (N-x) - \frac{A}{\alpha\phi(N)}x^{1-\alpha}.$$
A straightforward study of the functions $g_N$ shows that they each admit only one zero $x_N$ in $]0,N[$, which has the asymptotic behavior $x_N \sim N$. More precisely, we have $N(1-\frac{A}{\alpha N^\alpha \phi(N)})= N-\frac{AN^{1-\alpha}}{\alpha \phi(N)} \leq x_N \leq N.$
Using the inequality \eqref{eq:error_mu_N_nu_N_intermediate}, we obtain
\begin{align*}
	|\mu_N^+(f)-\nu_N^+(f)| & \leq \frac{\|f'\|_\infty}{2\psi(N)} \Big( \sum_{m=1}^{\floor*{x_N}} A
	+ \alpha\phi(N) \sum_{m=\floor*{x_N}+1}^{N-1} \frac{N-m}{m^{1-\alpha}} \Big) \\
	& \leq \frac{\|f'\|_\infty}{2\psi(N)} \Big(NA + \alpha \phi(N) \int_{x_N}^N \frac{N-x}{x^{1-\alpha}}\,dx \Big)
\end{align*}
since $x \mapsto \frac{N-x}{x^{1-\alpha}}$ is nonincreasing on $]0,N]$. Using the above approximation of $x_N$, we get, for all $x \in [x_N,N]$, the inequality $N-x \leq \frac{AN^{1-\alpha}}{\alpha\phi(N)}$. The integral $\int_{x_N}^N \frac{N-x}{x^{1-\alpha}}\,dx$ is then bounded from above by $\frac{AN}{\alpha^2\phi(N)}$. Since $\psi(N)=\frac{N^{2-\alpha}}{\phi(N)}$, it yields
\begin{equation}\label{eq:error_mu_N_nu_N}
	|\mu_N^+(f)-\nu_N^+(f)| \leq \frac{\|f'\|_\infty}{2\psi(N)} \Big( NA + \alpha \phi(N) \frac{AN}{\alpha^2\phi(N)} \Big) \leq \frac{(1+\alpha) A\|f'\|_\infty}{2\alpha} \frac{\phi(N)}{N^{1-\alpha}}.
\end{equation}
We remark that this error term goes to $0$ only in the case at hand: we won't be able to use the same measures $\mu_N^+$ and $\nu_N^+$ for the last case $\frac{\phi(N)}{N^{1-\alpha}} \underset{N\to\infty}{\longrightarrow}\lambda \in \RR_+^*$. Now that we are assured that the measure $\nu_N^+$ is a good approximation of $\mu_N^+$, we can move forward and study the convergence of $(\nu_N^+)_{N\in\NN}$. As those measures have a density, that we denote by $\theta_N$, with respect to the Lebesgue measure, we study their pointwise convergence. Let $t\in\RR_+$. We have
$$\theta_N(t) = \frac{1}{\psi(N)} \sum_{m=1}^{N-1} \frac{1}{\delta_{N,m}}\11_{[0,(N-m)\delta_{N,m}]}(t).$$
To see its behavior as $N\to\infty$, we use a $t$-depending version of the function $g_N$ : for all $m\in\intbrackets*{1,N-1}$, we have
$$ (N-m)\delta_{N,m} \leq t \iff g_{N,t}(m)\leq 0 \mbox{ where } g_{N,t}:x\mapsto (N-x) - \frac{t}{\alpha\phi(N)}x^{1-\alpha}.$$
Once again, each $g_{N,t}$ only has one zero in $\RR_+$ that we denote by $x_{N,t}$, and we still have the approximation $N(1-\frac{t}{\alpha N^\alpha\phi(N)})=N-\frac{tN^{1-\alpha}}{\alpha\phi(N)} \leq x_{N,t} \leq N$. Since $\psi(N)=\frac{N^{2-\alpha}}{\phi(N)}$, we can rewrite $\theta_N(t)$ as follows:
\begin{equation}\label{eq:theta_N(t)_sum}
	\theta_N(t) = \frac{1}{\psi(N)} \sum_{m=1}^{\floor*{x_{N,t}}}\frac{1}{\delta_{N,m}} = \frac{1}{\alpha\phi(N)\psi(N)} \sum_{m=1}^{\floor*{x_{N,t}}} m^{1-\alpha}=\frac{1}{\alpha N^{2-\alpha}} \sum_{m=1}^{\floor*{x_{N,t}}} m^{1-\alpha}.
\end{equation}
The last sum is comparable to an integral. More precisely, we have the approximation
\begin{align*}
	& \int_0^{\floor{x_{N,t}}}x^{1-\alpha}dx \leq \sum_{m=1}^{\floor*{x_{N,t}}} m^{1-\alpha} \leq \int_0^{\floor{x_{N,t}}}x^{1-\alpha}dx + \floor{x_{N,t}}^{1-\alpha} \\
	\mbox{i.e. } & \frac{1}{2-\alpha}\floor*{x_{N,t}}^{2-\alpha} \leq \sum_{m=1}^{\floor*{x_{N,t}}} m^{1-\alpha} \leq \frac{1}{2-\alpha}\floor*{x_{N,t}}^{2-\alpha} + \floor{x_{N,t}}^{1-\alpha}.
\end{align*}
Combining this integral comparison with the expression \eqref{eq:theta_N(t)_sum} and using the asymptotic behavior $x_{N,t} \sim N$ as $t \in \RR_+$ is fixed, we get the pointwise convergence
\begin{equation}\label{eq:theta_N(t)_limit}
	\theta_N(t) \underset{N\to\infty}{\longrightarrow} \frac{1}{\alpha(2-\alpha)}=\rho_\alpha(t).
\end{equation}
We could conclude the proof of Theorem \ref{th:cv_correlations} in the case at hand, that is under the regime $\frac{\phi(N)}{N^{1-\alpha}}\underset{N\to\infty}{\longrightarrow}0$, by use of the dominated convergence theorem. However, for the effective version we present in Theorem \ref{th:effective_cv_correlations}, we need more precision. First, we have the inequality
$$|\nu_N^+(f) - \rho_\alpha \Leb_{\RR_+}(f)| \leq \|f\|_\infty \int_0^A \Big| \theta_N(t) - \frac{1}{\alpha(2-\alpha)}\Big| dt.$$
For all $t\in[0,A]$, the previous integral comparison and the approximation of $x_{N,t}$ yield
\begin{align*}
	& \Big| \theta_N(t) - \frac{1}{\alpha(2-\alpha)}\Big| = \frac{1}{\alpha} \Big| \frac{1}{N^{2-\alpha}}\sum_{m=1}^{\floor*{x_{N,t}}}m^{1-\alpha} - \frac{1}{2-\alpha}\Big| \\
	\leq & \frac{1}{\alpha(2-\alpha)} \max\Big\{\Big| \frac{\floor*{x_{N,t}}^{2-\alpha}}{N^{2-\alpha}}+ (2-\alpha) \frac{\floor*{x_{N,t}}^{1-\alpha}}{N^{2-\alpha}}-1 \Big|, \, 1-\frac{\floor*{x_{N,t}}^{2-\alpha}}{N^{2-\alpha}} \Big\} \\
	\leq & \frac{1}{\alpha}\Big( \frac{1}{N} + \frac{t}{\alpha N^\alpha\phi(N)}\Big).
\end{align*}
We consequently get the estimate, for all $N\in\NN$,
\begin{equation}\label{eq:error_nu_N_poisson}
	| \nu_N^+(f) - \rho_\alpha \Leb_{\RR_+}(f) | \leq \frac{\|f\|_\infty A}{\alpha}\Big( \frac{A}{2\alpha N^\alpha\phi(N)}+\frac{1}{N} \Big).
\end{equation}
Summing the error terms from Lemma \ref{lem:linear_approx}, Equations \eqref{eq:error_mu_N_nu_N} and \eqref{eq:error_nu_N_poisson}, we finally get the effective convergence in the second case of Theorem \ref{th:effective_cv_correlations}: for all $N$ large enough so that $\phi(N) > \frac{A}{2^\alpha-1}$, there exists some real number $c_\alpha>0$, depending only on $\alpha$, such that
$$| \R_N^{\alpha,+}(f) - \rho_\alpha \Leb_{\RR_+}(f)| \leq \frac{c_\alpha}{2} (\|f\|_\infty + \|f'\|_\infty)A^3 \Big( \frac{\phi(N)}{N^{1-\alpha}} +  \frac{1}{N^\alpha \phi(N)} + \frac{1}{N} \Big).$$
(We used $\frac{c_\alpha}{2}$ in order to stick to the notations from Lemma \ref{lem:correl_symmetry}). An explicit example of such a constant is given by $c_\alpha = 2 \max\big\{c_\alpha', \frac{1+\alpha}{2\alpha},\frac{1}{2\alpha^2}\big\}$, where $c_\alpha'$ is defined in the proof of Lemma \ref{lem:linear_approx} as $c_\alpha'=\frac{2^{1+3(1-\alpha)}+2^{\frac{1}{\alpha}+4-3\alpha}(1-\alpha)}{3\alpha^2}$. Using Lemma \ref{lem:correl_symmetry}, the same $c_\alpha$ is an example of a constant for Theorem \ref{th:effective_cv_correlations}.
\medskip
	
\subsection[regime lambda]{Regime $\frac{\phi(N)}{N^{1-\alpha}} \underset{N\to\infty}{\longrightarrow}\lambda \in \RR_+^*$}

Let us first assume that $f \in C_c^1(\RR_+^*)$ (instead of $C_c^1(\RR)$) and choose $0<\e<1$ such that $\supp{f} \subset [\e,A]$. This lower bound on the support of $f$ will not be an obstacle, as the limiting measure will display some level repulsion property. We discuss how to pass to general test functions in $C_c^1(\RR)$ at the end of the proof.
	
For this third and final case, the previous estimate \eqref{eq:error_mu_N_nu_N} is not enough: it gives an error term that does not vanish as $N\to\infty$. This gives us a hint that the limit measure will be exotic in comparison to the ones from the two previous regimes. Let us temporarily use the explicit notation $\rho_\alpha=\rho_{\alpha,\lambda}$ in order to emphasize the dependence of the function $\rho_\alpha$ on $\lambda$. We first notice that, since the real function $x \mapsto \lambda x$ is (continuous and) proper, and thanks to the formula
$$\R_N^{\alpha} = \lambda (x \mapsto \lambda x)_* \Big( \frac{1}{\lambda \psi(N)} \sum_{1 \leq n,m \leq N} \Delta_{\frac{\phi(N)}{\lambda} (n^\alpha - m^\alpha)} \Big)$$
and the equality $\lambda (x\mapsto \lambda x)_* (\rho_{\alpha,1} \Leb_{\RR_+}) = \rho_{\alpha,1}\big(\frac{\cdot}{\lambda}\big) \Leb_{\RR_+} = \rho_{\alpha,\lambda}\Leb_{\RR_+}$, it is sufficient to prove Theorem \ref{th:cv_correlations} in the special case $\lambda=1$. For Theorem \ref{th:effective_cv_correlations}, we will discuss how the error term depends on $\lambda$ at the end of the proof. Henceforth, we assume that $\lambda=1$. As in the study of the regime $\frac{\phi(N)}{N^{1-\alpha}}\underset{N\to\infty}{\longrightarrow}0$, we use Lemmas \ref{lem:correl_symmetry} and \ref{lem:linear_approx} and study the behaviour of $(\mu_N^+)_{N\in\NN}$. Fix $N \geq 2$. Recall that $\psi(N)=\frac{N^{2-\alpha}}{\phi(N)} \underset{N\to\infty}{\sim}N$ in this case. Set $h_\alpha: x \mapsto x^{-(1-\alpha)}$ which is a diffeomorphism on $\RR_+^*$ with inverse $x\mapsto x^{-\frac{1}{1-\alpha}}$. We can then define the positive measure $\widetilde{\mu}_N^+$ on $\RR_+^*$ by the formula $\widetilde{\mu}_N^+=(h_\alpha^{-1})_*\mu_N^+$, that is
{\large\begin{equation}\label{eq:def_mu_tilde}
	\widetilde{\mu}_N^+ = \frac{1}{\psi(N)}\sum_{m=1}^{N-1}\sum_{p=1}^{N-m} \Delta_{m\frac{1}{(\alpha p \phi(N))^{\frac{1}{1-\alpha}}}} = \frac{1}{\psi(N)} \sum_{p=1}^{N-1} \sum_{m=1}^{N-p} \Delta_{m \frac{1}{(\alpha p \phi(N))^{\frac{1}{1-\alpha}}}}.
\end{equation}}
We thus see $\widetilde{\mu}_N^+$ as a weighted sum of Riemann sums with step denoted by
$$\delta_{N,p}=\frac{1}{(\alpha p \phi(N))^{\frac{1}{1-\alpha}}}.$$
We will compare it to the positive measure $\widetilde{\nu}_N^+$ defined by the equality
$$ \widetilde{\nu}_N^+ = \frac{1}{\psi(N)} \sum_{p=1}^{N-1} \frac{1}{\delta_{N,p}} \11_{[0,(N-p)\delta_{N,p}]} \Leb_{\RR_+}.$$
For that purpose, we will use Lemma \ref{lem:riemann_approx}, and thus need to understand thoroughly the quantity $\min\{A,(N-p)\delta_{N,p}\}$. We have the equivalence, for all $p\in\intbrackets*{1,N-1}$,
$$(N-p)\delta_{N,p} \geq A \iff g_N(p) \leq 0 \mbox{ where now } g_N : x \mapsto x - \frac{1}{\alpha A^{1-\alpha}} \frac{(N-x)^{1-\alpha}}{\phi(N)}.$$
A straightforward analysis of the function $g_N$ shows that it has a unique zero $x_N$ in $]0,N[$. Then we have the convergence $x_N \underset{N\to\infty}{\longrightarrow}\ell = \frac{1}{\alpha A^{1-\alpha}}$. We immediately get the following bound for the speed of convergence:
\begin{equation}\label{eq:estim_xN}
	\frac{x_N}{\ell} = \frac{(N-x_N)^{1-\alpha}}{\phi(N)} \leq \frac{N^{1-\alpha}}{\phi(N)}.
\end{equation}
Suppose first that $\alpha\neq \frac{1}{2}$ (i.e.~$\frac{1}{1-\alpha} \neq 2$). Because of the initial change of variable $h_\alpha$, we have to be cautious: when summing to get the total error term, we will apply Lemma \ref{lem:riemann_approx} to the function $\widetilde{f}=f \circ h_\alpha$. The inclusion $\supp{f} \subset [\e,A]$ yields $\supp{\widetilde{f}} \subset \big[ \frac{1}{A^{1-\alpha}}, \frac{1}{\e^{1-\alpha}} \big]$. Set $\widetilde{A}=\frac{1}{\e^{1-\alpha}}$. Applying Lemma \ref{lem:riemann_approx} to $\widetilde{f}$ with $\delta = \delta_{N,p}$, $M=N-p$ and $B=\widetilde{A}$, and using an integral comparison coming from the fact that the function $x \mapsto (N-x) x^{-\frac{1}{1-\alpha}}$ is nonincreasing on  $\RR_+^*$ (while being cautious of the case $\floor*{x_N}=0$ for the integral to be definite), we have
\begin{align*}
	& |\widetilde{\mu}_N^+(\widetilde{f}) - \widetilde{\nu}_N^+(\widetilde{f})|\\
	\leq & \frac{1}{\psi(N)} \sum_{p=1}^{N-1} \frac{1}{\delta_{N,p}} \Big| \int_0^{(N-p)\delta_{N,p}}\widetilde{f}(t) \, dt - \delta_{N,p} \sum_{m=1}^{N-p} \widetilde{f}(m\delta_{N,p}) \Big| \\
	\leq & \frac{\|\widetilde{f}'\|_\infty}{2\psi(N)} \sum_{p=1}^{N-1} \min\{\widetilde{A},(N-p)\delta_{N,p}\}\\
	\leq & \frac{\|f'\|_\infty}{2\psi(N)} \Big( \sum_{p=1}^{\floor*{x_N}} \widetilde{A} + \sum_{p=\floor*{x_N}+1}^{N-1}(N-p)\delta_{N,p} \Big) \\
	= & \frac{\widetilde{A}\|\widetilde{f}'\|_\infty}{2\psi(N)}\floor*{x_N} + \frac{\|\widetilde{f}'\|_\infty}{2\psi(N)} \frac{1}{(\alpha\phi(N))^{\frac{1}{1-\alpha}}} \sum_{p=\floor*{x_N}+1}^{N-1} \frac{N-p}{p^{\frac{1}{1-\alpha}}} \\
	\leq & \frac{\widetilde{A}\|\widetilde{f}'\|_\infty\ell}{2\psi(N)}\frac{N^{1-\alpha}}{\phi(N)} + \frac{\|\widetilde{f}'\|_\infty}{2\psi(N)} \frac{1}{(\alpha\phi(N))^{\frac{1}{1-\alpha}}} \Big( \frac{N}{(\floor*{x_N}+1)^{\frac{1}{1-\alpha}}} +  \int_{\floor*{x_N}+1}^N \frac{N-x}{x^{\frac{1}{1-\alpha}}} \, dx\Big)\\
	= & \frac{\widetilde{A}\|\widetilde{f}'\|_\infty\ell}{2N} +\frac{\|\widetilde{f}'\|_\infty N}{2\psi(N) (\alpha \phi(N) (\floor*{x_N}+1))^{\frac{1}{1-\alpha}}} \\
	& \quad + \frac{\|\widetilde{f}'\|_\infty}{2\psi(N)(\alpha\phi(N))^{\frac{1}{1-\alpha}}} \left[ \frac{N}{1- \frac{1}{1-\alpha}}x^{1-\frac{1}{1-\alpha}} - \frac{1}{2-\frac{1}{1-\alpha}}x^{2-\frac{1}{1-\alpha}} \right]_{x=\floor*{x_N}+1}^{x=N}
\end{align*}
where we used the formula $\psi(N)=\frac{N^{2-\alpha}}{\phi(N)}$. The expression between brackets is equal to
$$
\frac{(1-\alpha)^2}{\alpha(2\alpha-1)} N^{2 - \frac{1}{1-\alpha}} + \frac{1-\alpha}{\alpha}N(\floor*{x_N}+1)^{\frac{-\alpha}{1-\alpha}} + \frac{1-\alpha}{1-2\alpha}(\floor*{x_N}+1)^{\frac{1-2 \alpha}{1-\alpha}}.
$$
The function $x \mapsto x^{\frac{-\alpha}{1-\alpha}}$ is nonincreasing on $\RR_+^*$, and $x \mapsto x^{\frac{1-2\alpha}{1-\alpha}}$ is monotone (of monotony given by the sign of $\frac{1}{2}-\alpha$). By the inequality \eqref{eq:estim_xN}, for all $N$ large enough (depending only on $\alpha$, $A$, $\phi$), we have $ \ell \leq \floor*{x_N}+1 \leq \ell +2$, providing the estimates $(\floor*{x_N}+1)^{\frac{-\alpha}{1-\alpha}} \leq \ell^{\frac{-\alpha}{1-\alpha}}$ and $ (\floor*{x_N}+1)^{\frac{1-2\alpha}{1-\alpha}} \leq C(\ell,\alpha) = \ell^\frac{1-2\alpha}{1-\alpha} \mbox{ or } (\ell + 2)^{\frac{1-2\alpha}{1-\alpha}}$ (depending of the sign of $\frac{1}{2}-\alpha$).
Summing those error terms, recalling that $\phi(N) \sim N^{1-\alpha}$, hence $\psi(N) \sim N$, and using the inequalities $\frac{1}{A^{1-\alpha}} \leq \ell \leq \frac{1}{\alpha}$, we get the following bound
\begin{align*}
	|\widetilde{\mu}_N^+(\widetilde{f}) - \widetilde{\nu}_N^+(\widetilde{f})| & = \bigO_\alpha \Big( \|\widetilde{f}'\|_\infty \Big( \frac{\widetilde{A} \ell}{N} + \frac{1}{N \ell^{\frac{1}{1-\alpha}}} + \frac{1}{N^{\frac{1}{1-\alpha}}} + \frac{\ell^{\frac{-\alpha}{1-\alpha}}}{N} + \frac{C(\ell,\alpha)}{N^2} \Big) \Big) \\
	& = \bigO_\alpha \Big(\frac{\|\widetilde{f}'\|_\infty (\widetilde{A}\ell + \ell^{-\frac{1}{1-\alpha}})}{N}\Big) = \bigO_\alpha \Big(\frac{\|\widetilde{f}'\|_\infty (\widetilde{A}+A)}{N}\Big).
\end{align*}
For the case $\alpha=\frac{1}{2}$, the integration of $x\mapsto\frac{N-x}{x^{\frac{1}{1-\alpha}}}$ gives an extra error term of order $\log(N)$ coming with a factor $\frac{1}{N\phi(N)^{\frac{1}{1-\alpha}}} \sim \frac{1}{N^2}$, which keeps the result valid since $\frac{\log(N)}{N^2} \leq \frac{1}{N}$. Recalling the definitions $\widetilde{A}=\frac{1}{\e^{1-\alpha}}$ and $\widetilde{f}=f\circ h_\alpha$, we finally have
\begin{align*}
	|\widetilde{\mu}_N^+(f\circ h_\alpha) - \widetilde{\nu}_N^+(f\circ h_\alpha)| & = \bigO_\alpha \left( \frac{\| h_\alpha' \, f' \circ h_\alpha \|_\infty\big( \frac{1}{\e^{1-\alpha}}+A \big)}{N} \right) \\
	& = \bigO_\alpha \left( \frac{\| h_\alpha' \, f' \circ h_\alpha \|_\infty A}{N\e^{1-\alpha}} \right).\numberthis\label{eq:error_tilde_mu+_nu+}
\end{align*}
Our goal is now to find the limit of $(\widetilde{\nu}_N^+)_{N\in\NN}$ and to inverse the change of variable in order to get back to $(\R_N^{\alpha,+})_{N\in\NN}$. Let $t \in \RR_+^*$. The Radon-Nikodym derivative $\widetilde{\theta}_N$ of $\widetilde{\nu}_N^+$ (with respect to the Lebesgue measure) is given by
$$\widetilde{\theta}_N(t)=\frac{1}{\psi(N)} \sum_{p=1}^{N-1} \frac{1}{\delta_{N,p}} \11_{[0,(N-p)\delta_{N,p}]}(t).$$
Let us rewind using the change of variable $h_\alpha : x \mapsto x^{-(1-\alpha)}$. Set $\nu_N^+=(h_\alpha)_* \widetilde{\nu}_N^+$. Its Radon-Nikodym derivative $\theta_N$ verifies
$$\theta_N(t)=|(h_\alpha^{-1})'(t)| \; \widetilde{\theta}_N \circ h_\alpha^{-1}(t) = \frac{t^{-\frac{2-\alpha}{1-\alpha}}}{(1-\alpha) \psi(N)} \sum_{p=1}^{N-1} \frac{1}{\delta_{N,p}} \11_{[0,(N-p)\delta_{N,p}]}\big(t^{-\frac{1}{1-\alpha}}\big).$$
We have the equivalence, for all $p\in\intbrackets*{1,N-1}$,
$$(N-p)\delta_{N,p} \geq t^{-\frac{1}{1-\alpha}} \iff g_{N,t}(p) \leq 0 \mbox{ where now } g_{N,t} : x \mapsto x - \frac{t}{\alpha}\frac{(N-x)^{1-\alpha}}{\phi(N)}.$$
Once again, a direct study of these nondecreasing functions gives us the existence of a unique zero $x_{N,t}$ of $g_{N,t}$ in $]0,N[$. It verifies $x_{N,t} \underset{N\to\infty}{\longrightarrow}\frac{t}{\alpha}$ and its definition grants us the following estimation for its speed of convergence, valid for all $N \geq 2$,
\begin{equation}\label{eq:estim_x_N,t}
	\frac{t}{\alpha} \frac{N^{1-\alpha}}{\phi(N)} \Big(1-\frac{t}{\alpha N^\alpha \phi(N)} \Big)^{1-\alpha} \leq x_{N,t} \leq \frac{t}{\alpha} \frac{N^{1-\alpha}}{\phi(N)}.	
\end{equation}
This estimate is useful as it implies some uniform convergence, namely that for all compact subset $K$ in $\RR_+$, we have
\begin{equation}\label{eq:unif_conv_x_N,t}
	\sup_{t \in K} \Big|x_{N,t}-\frac{t}{\alpha}\Big| \underset{N\to\infty}{\longrightarrow}0.
\end{equation}
Using first the nonuniform version of this, we have the following pointwise convergence, for $t\in\RR_+-\alpha\NN$,
$$\theta_N(t) = \frac{\alpha^{\frac{1}{1-\alpha}}}{1-\alpha} \frac{\phi(N)^{\frac{1}{1-\alpha}}}{\psi(N)} t^{-\frac{2-\alpha}{1-\alpha}} \sum_{p=1}^{\floor*{x_{N,t}}} p^{\frac{1}{1-\alpha}} \underset{N\to\infty}{\longrightarrow} \theta_\infty(t) = \frac{\alpha^{\frac{1}{1-\alpha}}}{1-\alpha} t^{-\frac{2-\alpha}{1-\alpha}} \sum_{p=1}^{\floor*{\frac{t}{\alpha}}} p^{\frac{1}{1-\alpha}}.$$
Let $\theta_\infty : \RR_+ \to \RR_+$ denote the limit measurable function on $\RR_+$ in this (almost everywhere) convergence, which is the restriction to $\RR_+$ of the function $\rho_\alpha$ in Theorem \ref{th:cv_correlations} (for $\lambda=1$). In order to get an effective vague convergence, we first observe the inequality
\begin{align*}
	|\nu_N^+(f) - \theta_\infty \Leb_{\RR_+}(f)| & \leq \int_0^A |f(t)(\theta_N(t)-\theta_\infty(t))|\, dt \\
	& \leq \|f\|_\infty \frac{\alpha^{\frac{1}{1-\alpha}}}{1-\alpha} \frac{\phi(N)^{\frac{1}{1-\alpha}}}{\psi(N)} \int_0^A t^{-\frac{2-\alpha}{1-\alpha}} \; \Big|\sum_{p=1}^{\floor*{x_{N,t}}} p^{\frac{1}{1-\alpha}}-\sum_{p=1}^{\floor*{\frac{t}{\alpha}}} p^{\frac{1}{1-\alpha}}\Big| \, dt \\
	& \quad + \|f\|_\infty \Big| \frac{\phi(N)^{\frac{1}{1-\alpha}}}{\psi(N)} - 1 \Big| \int_0^A \theta_\infty(t) dt. \numberthis\label{eq:nu_theta_infty_exotic}
\end{align*}
For all $k\in\NN$, the function $\theta_\infty$ in bounded on the interval interval $[k\alpha,(k+1)\alpha[$. Since, by comparing to an integral, we have the convergence $\theta_\infty \underset{+\infty}{\longrightarrow} \frac{1}{\alpha(2-\alpha)}$, this proves that $\theta_\infty$ is bounded on $\RR_+$. As $\theta_\infty$ is defined using only the parameter $\alpha$, we have
\begin{align*}
	\|f\|_\infty \Big| \frac{\phi(N)^{\frac{1}{1-\alpha}}}{\psi(N)} - 1 \Big| \int_0^A \theta_\infty(t) dt & = \bigO_\alpha \Big( \|f\|_\infty A \Big| \frac{\phi(N)^{\frac{1}{1-\alpha}}}{\psi(N)}-1\Big| \Big)\\
	& = \bigO_\alpha \Big( \|f\|_\infty A \Big| \Big( \frac{\phi(N)}{N^{1-\alpha}} \Big)^{\frac{2-\alpha}{1-\alpha}} -1 \Big| \Big). \numberthis\label{eq:remainder_theta_infty_exotic}
\end{align*}
As the lower integer part function $\floor*{\cdot}$ is continuous on $\RR-\ZZ$, we know that, for all $t \in \RR_+-\alpha\NN$ and $N$ large enough depending on $t$ (and $\alpha$), we have the equality $\floor*{x_{N,t}}=\floor*{\frac{t}{\alpha}}$, meaning that $\theta_N(t)= \frac{\phi(N)^{\frac{1}{1-\alpha}}}{\psi(N)} \theta_\infty(t)$. We set
$$
\hat{\theta}_N = \frac{\psi(N)}{\phi(N)^{\frac{1}{1-\alpha}}} \theta_N.
$$
Thus the almost everywhere convergence of $(\hat{\theta}_N)_{N\in\NN}$ is stationary. However, it is not necessarily uniform as it can be much slower for $t$ close to $\alpha\NN$. Define two functions $\delta_-:t\mapsto t-\alpha \floor*{\frac{t}{\alpha}}$ and $\delta_+:t\mapsto \alpha\ceil*{\frac{t}{\alpha}}-t$, where $\ceil*{\cdot}$ denotes the upper integer part function. We use the speed of convergence of the sequences $(x_{N,t})_{N\in\NN}$ described in the inequalities \eqref{eq:estim_x_N,t} and we get, for all $t\in(\RR_+-\alpha\NN)\cap[0,A]$,

\begin{align*}
	x_{N,t} <\Big\lfloor \frac{t}{\alpha} \Big\rfloor+1 & \impliedby \frac{t}{\alpha} \frac{N^{1-\alpha}}{\phi(N)} < \Big\lfloor \frac{t}{\alpha} \Big\rfloor +1 \iff \frac{N^{1-\alpha}}{\phi(N)} < \frac{\floor*{\frac{t}{\alpha}}+1}{\frac{t}{\alpha}} = 1+\frac{\delta_+(t)}{t}\\
	\mbox{and } x_{N,t} \geq \Big\lfloor \frac{t}{\alpha} \Big\rfloor & \impliedby \frac{t}{\alpha} \frac{N^{1-\alpha}}{\phi(N)} \Big(1-\frac{t}{\alpha N^\alpha \phi(N)} \Big)^{1-\alpha} \geq \Big\lfloor \frac{t}{\alpha} \Big\rfloor \\
	\mbox{thus } x_{N,t} \geq \Big\lfloor \frac{t}{\alpha} \Big\rfloor & \impliedby \frac{N^{1-\alpha}}{\phi(N)} \Big(1-\frac{A}{\alpha N^\alpha \phi(N)} \Big)^{1-\alpha} \geq \frac{\floor*{\frac{t}{\alpha}}}{\frac{t}{\alpha}}=1-\frac{\delta_-(t)}{t}.
\end{align*}
Now let us study, for $N$ large enough independently on $t$, the proportion of $t\in[0,A]$ verifying both of these inequalities on $\delta_\pm(t)$. Let us define
$$X_N=\Big\{ t \in \RR_+ \, : \, \delta_+(t) \leq t\Big( \frac{N^{1-\alpha}}{\phi(N)}-1\Big) \mbox{ or } \delta_-(t) < t \Big( 1 - \frac{N^{1-\alpha}}{\phi(N)}\Big(1-\frac{A}{\alpha N^\alpha \phi(N)}\Big)^{1-\alpha} \Big)\Big\},$$
that is, the subset of $t$'s failing to verify at least one the two previous inequalities which were allowing to have $\hat{\theta}_N(t)= \theta_\infty(t)$. By definition of $\delta_-$ and $\delta_+$, the set $X_N$ is included in a union of intervals $I_k$ around each $k\alpha$, for $k\in\intbrackets*{0,\floor*{\frac{A}{\alpha}}}$, whose length is at most
$$ k\alpha \Big( \frac{N^{1-\alpha}}{\phi(N)}-1\Big) + (k+1)\alpha \Big( 1 - \frac{N^{1-\alpha}}{\phi(N)}\Big(1-\frac{A}{\alpha N^\alpha \phi(N)}\Big)^{1-\alpha} \Big) = \bigO_\alpha \Big( A \Big|\frac{N^{1-\alpha}}{\phi(N)}-1\Big| \Big).$$
As we will sum these lengths, it is important to notice that the right-hand side of the previous equality does not depend on $k$: there exists $c_\alpha''>0$ depending only on $\alpha$ such that, for $N$ large enough, depending on $\alpha$, $A$ and $\phi$, for all $k\in \intbrackets*{0,\floor*{\frac{A}{\alpha}}}$, we have the inequality
$$
\Leb_\RR(I_k) \leq c_\alpha'' A \Big( \frac{N^{1-\alpha}}{\phi(N)}-1 \Big).
$$
In order to also get some upper bound on $|\hat{\theta}_N - \theta_\infty|$ on $X_N$, we use the uniform convergence property \eqref{eq:unif_conv_x_N,t}: we know that there exists $N_0 \in \NN$ (depending on $\alpha$ and $A$) such that for all $N\geq N_0$ and for all $t \in [0,A]$, we have the inequality $\big|\floor*{x_{N,t}} - \floor*{\frac{t}{\alpha}}\big| \leq 1$. We also notice that $\hat{\theta}_N = \theta_\infty$ near $0$. More precisely, there exists some integer $N_1 \geq N_0$ such that, for all $N \geq N_1$, we have the inequality $\frac{N^{1-\alpha}}{\phi(N)} < 2$, hence $x_{N,t} < 2 \frac{t}{\alpha}$ thanks to the right-hand side in the inequalities \eqref{eq:estim_x_N,t}. For such integers $N$, we have the equality $\hat{\theta}_N = \theta_\infty(=0)$ on $[0,\frac{\alpha}{2}]$. This equality can be understood as the level repulsion phenomenon for $\theta_N$. We can now bound from above the integral of $| \hat{\theta}_N-\theta_\infty|$. Indeed, for all $N \geq N_1$, we have

\begin{align*}
	& \int_0^A t^{-\frac{2-\alpha}{1-\alpha}} \Big|\sum_{p=1}^{\floor*{x_{N,t}}} p^{\frac{1}{1-\alpha}}-\sum_{p=1}^{\floor*{\frac{t}{\alpha}}} p^{\frac{1}{1-\alpha}}\Big| \, dt = \int_{[\frac{\alpha}{2},A] \cap X_N} t^{-\frac{2-\alpha}{1-\alpha}} \Big|\sum_{p=1}^{\floor*{x_{N,t}}} p^{\frac{1}{1-\alpha}}-\sum_{p=1}^{\floor*{\frac{t}{\alpha}}} p^{\frac{1}{1-\alpha}}\Big| \, dt \\
	\leq & \int_{[\frac{\alpha}{2},A] \cap X_N} t^{-\frac{2-\alpha}{1-\alpha}} \Big( x_{N,t}^{\frac{1}{1-\alpha}} + \Big( \frac{t}{\alpha} \Big)^{\frac{1}{1-\alpha}} \Big) \, dt \\
	\leq & \sum_{k=1}^{\floor*{\frac{A}{\alpha}}} \int_{[\frac{\alpha}{2},A]\cap I_k} t^{-\frac{2-\alpha}{1-\alpha}} \Big( \Big( 2\frac{t}{\alpha} \Big)^{\frac{1}{1-\alpha}} + \Big( \frac{t}{\alpha} \Big)^{\frac{1}{1-\alpha}} \Big) \, dt
	\leq \frac{2^{\frac{1}{1-\alpha}}+1}{\alpha^{\frac{1}{1-\alpha}}} \sum_{k=1}^{\floor*{\frac{A}{\alpha}}} \int_{[\frac{\alpha}{2},A]\cap I_k} \frac{dt}{t} \\
	\leq & \frac{1}{\alpha^{\frac{1}{1-\alpha}}} \sum_{k=1}^{\floor*{\frac{A}{\alpha}}} \frac{4.2^{\frac{1}{1-\alpha}}}{\alpha} c_\alpha'' A \Big|\frac{N^{1-\alpha}}{\phi(N)}-1\Big| = \bigO_\alpha \left(A^2 \Big|\frac{N^{1-\alpha}}{\phi(N)}-1\Big| \right).
\end{align*}
By Equations \eqref{eq:nu_theta_infty_exotic} and \eqref{eq:remainder_theta_infty_exotic}, and since $A \geq 1$, this gives the final error term for the vague convergence of $(\nu_N^+)_{N\in\NN}$:
\begin{align*}
	|\nu_N^+(f) - \theta_\infty \Leb_{\RR_+}(f)|
	& = \bigO_\alpha \left(\|f\|_\infty A^2 \Big|\frac{N^{1-\alpha}}{\phi(N)}-1\Big| + A \Big| \Big( \frac{\phi(N)}{N^{1-\alpha}} \Big)^{\frac{2-\alpha}{1-\alpha}} -1 \Big| \right) \\
	& = \bigO_\alpha \left( \|f\|_\infty A^2 \Big| \Big( \frac{\phi(N)}{N^{1-\alpha}} \Big)^{\frac{2-\alpha}{1-\alpha}} -1 \Big| \right). \numberthis\label{eq:error_nu_+_theta}
\end{align*}
Recalling the definition of $h_\alpha : x \mapsto \frac{1}{x^{1-\alpha}}$, hence $|h_\alpha'| : x \mapsto (1-\alpha) \frac{1}{x^{2-\alpha}}$, and summing the error terms from Lemma \ref{lem:linear_approx} and Equations \eqref{eq:error_tilde_mu+_nu+}, \eqref{eq:error_nu_+_theta}, we finally obtain
	\begin{align*}
	& |\R_N^{\alpha,+}(f)-\theta_\infty \Leb_{\RR_+}(f)| \\
	\leq & |\R_N^{\alpha,+}(f) - \mu_N^+(f)| + |\mu_N^+(f) - \nu_N^+(f)| + |\nu_N^+(f) - \theta_\infty \Leb_{\RR_+}(f)| \\
	= & |\R_N^{\alpha,+}(f) - \mu_N^+(f)| + |(h_\alpha)_*\widetilde{\mu}_N^+(f) - (h_\alpha)_*\widetilde{\nu}_N^+(f)| + |\nu_N^+(f) - \theta_\infty \Leb_{\RR_+}(f)| \\
	= & \bigO_\alpha \left( \frac{A^3 \|f'\|_\infty}{N^\alpha \phi(N)}\right) + |\widetilde{\mu}_N^+(f \circ h_\alpha) - \widetilde{\nu}_N^+(f \circ h_\alpha)| + \bigO_\alpha \left( \|f\|_\infty A^2\Big| \Big( \frac{\phi(N)}{N^{1-\alpha}} \Big)^{\frac{2-\alpha}{1-\alpha}} -1 \Big| \right) \\
	= & \bigO_\alpha \left( \frac{A^3 \|f'\|_\infty}{N^\alpha \phi(N)} + \frac{\| h_\alpha' f' \circ h_\alpha \|_\infty A}{N\e^{1-\alpha}} + A^2 \|f\|_\infty \Big| \Big( \frac{\phi(N)}{N^{1-\alpha}} \Big)^{\frac{2-\alpha}{1-\alpha}} -1 \Big| \right) \\
	= & \bigO_\alpha \left( \frac{A^3 \|f'\|_\infty}{N} + \frac{\big|h_\alpha'\big(A^{-\frac{1}{1-\alpha}}\big)\big| \|f'\|_\infty A}{N\e^{1-\alpha}} + A^2 \|f\|_\infty \Big| \Big( \frac{\phi(N)}{N^{1-\alpha}} \Big)^{\frac{2-\alpha}{1-\alpha}} -1 \Big| \right) \\
	= & \bigO_\alpha \left( \frac{A^3 \|f'\|_\infty}{N} + \frac{A^{\frac{3-2\alpha}{1-\alpha}}\|f'\|_\infty}{N \e^{1-\alpha}} + A^2 \|f\|_\infty \Big| \Big( \frac{\phi(N)}{N^{1-\alpha}} \Big)^{\frac{2-\alpha}{1-\alpha}} -1 \Big| \right).
\end{align*}

Now let us drop the assumption on the existence of some positive lower bound $\e$ for $\supp{f}$: let $f \in C_c^1(\RR)$ and choose $A>1$ such that $\supp{f} \subset [-A,A]$. We remark that both the positive measure $\theta_\infty \Leb_{\RR_+}$ and the measures $\R_N^{\alpha,+}$, for $N\in\NN-\{0\}$, display some level repulsion property. Indeed, the function $\theta_\infty$ vanishes on $[0,\alpha[$, and Equation \eqref{eq:empirical_level_repulsion} implies that, for all $N\in\NN$, we have $\supp{\R_N^{\alpha,+}} \subset \big[ \frac{\alpha \phi(N)}{2N^{1-\alpha}}, +\infty \big[$.  For all $N$ large enough so that $\frac{\phi(N)}{N^{1-\alpha}} > \frac{1}{2}$, we thus have both inclusions
$$\supp(\theta_\infty \Leb_{\RR_+}) \subset [\alpha, +\infty [ \mbox{ and } \supp{\R_N^{\alpha,+}} \subset \big[ \frac{\alpha}{4}, +\infty \big[.$$

Set $\e = \frac{\alpha}{8}$. By a standard smoothing process, we know there exists a function $g \in C_c^1(\RR)$ verifying
\begin{enumerate}
	\item the functional equality $g=f$ (and hence $g'=f'$) on the interval $[2\e,A]$,
	\item the inclusion $\supp{g} \subset [\e,A]$,
	\item the inequality $\|g\|_\infty \leq \|f\|_\infty$,
	\item and the inequality $\|g'\|_\infty \leq \|f'\|_\infty + 2\frac{\|f\|_\infty}{\e}$.
\end{enumerate}
For $N$ large enough, the interval $[2\e,A]$ contains the support of both measures $\theta_\infty \Leb_{\RR_+}$ and $\R_N^{\alpha,+}$. Thus, the approximation $g$ of $f$ grants us the asymptotic upper bound
{\small
\begin{align*}
	& |\R_N^{\alpha,+}(f) - \theta_\infty \Leb_{\RR_+}(f)| = |\R_N^{\alpha}(g) - \theta_\infty \Leb_{\RR_+}(g)| \\
	= & \bigO_\alpha \left( \frac{A^3 \|g'\|_\infty}{N} + \frac{A^{\frac{3-2\alpha}{1-\alpha}}\|g'\|_\infty}{N \e^{1-\alpha}} + A^2 \|g\|_\infty \Big| \Big( \frac{\phi(N)}{N^{1-\alpha}} \Big)^{\frac{2-\alpha}{1-\alpha}} -1 \Big|  \right)\\
	= & \bigO_\alpha \left( \frac{\big(A^3+A^{\frac{3-2\alpha}{1-\alpha}}) (\|f'\|_\infty+\|f\|_\infty)}{N} + A^2 \|f\|_\infty \Big| \Big( \frac{\phi(N)}{N^{1-\alpha}} \Big)^{\frac{2-\alpha}{1-\alpha}} -1 \Big| \right).
\end{align*}
}
Since $3 < \frac{3-2\alpha}{1-\alpha}$, this concludes the proof in the case $\lambda=1$. For the general case, we use the notation $f_\lambda : x \mapsto f(\lambda x)$. Using again the notation $\rho_{\alpha,\lambda}$ to underline the dependence of $\rho_\alpha$ on $\lambda$, we have
\begin{align*}
	& |\R_N^\alpha(f)-\rho_{\alpha,\lambda} \Leb_\RR(f)|\\
	= & \Big| \lambda (x \mapsto \lambda x)_* \Big( \frac{1}{\lambda \psi(N)} \sum_{1 \leq n \neq m \leq N} \Delta_{\frac{\phi(N)}{\lambda}(n^\alpha - m^\alpha)}\Big) (f) - \lambda (x \mapsto \lambda x)_* (\rho_{\alpha,1} \Leb_\RR)(f) \Big| \\
	= & \lambda \Big| \frac{1}{\lambda \psi(N)} \sum_{1 \leq n \neq m \leq N} f_\lambda \Big(\frac{\phi(N)}{\lambda} (n^\alpha - m^\alpha)\Big) - \rho_{\alpha,1} \Leb_\RR (f_\lambda) \Big| \\
	= & \lambda \bigO_\alpha \left( \frac{A^{\frac{3-2\alpha}{1-\alpha}} (\|f_\lambda'\|_\infty+\|f_\lambda\|_\infty)}{N} + A^2 \|f_\lambda \|_\infty \Big| \Big( \frac{\phi(N)}{\lambda N^{1-\alpha}} \Big)^{\frac{2-\alpha}{1-\alpha}} -1 \Big| \right) \\
	= & \bigO_\alpha \left( \frac{A^{\frac{3-2\alpha}{1-\alpha}} (\lambda^2 \|f'\|_\infty+\lambda \|f \|_\infty)}{N} + A^2 \lambda \|f \|_\infty \Big| \Big( \frac{\phi(N)}{\lambda N^{1-\alpha}} \Big)^{\frac{2-\alpha}{1-\alpha}} -1 \Big| \right).
\end{align*}
This proves Theorem \ref{th:effective_cv_correlations} under the third regime, i.e.~assuming $\lambda \in \RR_+^*$, and finally concludes the proof of Theorem \ref{th:effective_cv_correlations}.\qed

\section{Unfolding technique}\label{sec:unfolding}
In this section, we find again the pair correlation function $\rho_\alpha$ in the exotic case with the scaling factor $\phi: N \mapsto N^{1-\alpha}$ and the renormalization factor $\psi: N\mapsto N$, using some interpretation of the unfolding technique (see for instance \cite[§~2.1]{marklof2002quadformII}), though extending it to general scaling factors and obtaining the error terms might require even more work than in the previous sections. In order to use that technique, we replace the positive part of the empirical pair correlation measure
$$\R_N^{\alpha,+} = \frac{1}{N} \sum_{0<m<n\leq N} \Delta_{N^{1-\alpha}(n^\alpha-m^\alpha)}$$
by the following positive measure on $]0,1]^2 \times \RR_+^*$,
$$\widetilde{\R}_N^{\alpha,+} = \frac{1}{N} \sum_{0<m<n\leq N} \Delta_{(\frac{n}{N}, \frac{m}{N}, \, N^{1-\alpha}(n^\alpha-m^\alpha))}.$$
Finding some limit measure for the sequence $(\widetilde{\R}_N^{\alpha,+})_{N\in\NN^*}$ then pushing forward on the third component of $]0,1]^2 \times \RR_+^*$ will imply convergence of $(\R_N^{\alpha,+})_{N\in\NN^*}$ and allow us to compute the limit. We look for a change of variables $h_\alpha$ from $]0,1]^2 \times \RR_+^*$ to itself, of the form
$$ h_\alpha : (x,y,t) \mapsto (x,y, g_\alpha (x,y)t) $$
where $g_\alpha$ is a function not depending on $N$ and verifying, for all $N,m,n$, the formula $g_\alpha(\frac{n}{N}, \frac{m}{N}) N^{1-\alpha} (n^\alpha - m^\alpha) = n-m$. Such a function is given by
$$
g_\alpha : (x,y) \mapsto \left\{
\begin{array}{cl}
	\frac{x-y}{x^\alpha-y^\alpha} & \mbox{if } x \neq y,\vspace{2mm} \\
	\frac{x^{1-\alpha}}{\alpha} & \mbox{if } x=y.	
\end{array} 
\right.
$$
The definition of $g_\alpha$ on the case $x=y$ allows the function to be continuous on $]0,1]^2$. We denote by $\Leb_{\rm diag}$ the probability measure on (the diagonal of) the square $]0,1]^2$ defined by
$$ \forall f \in C^0(\,]0,1]^2), \, \Leb_{\rm diag}(f) = \int_0^1 f(t,t) \, dt. $$
Applying the change of variables $h_\alpha$, we have
\begin{align*}
(h_\alpha)_* \widetilde{\R}_N^{\alpha,+} = \frac{1}{N} \sum_{0<m<n\leq N} \Delta_{(\frac{n}{N}, \frac{m}{N}, \, n-m)} & = \frac{1}{N} \sum_{p=1}^{N-1} \sum_{m=1}^{N-p} \Delta_{(\frac{m}{N} + \frac{p}{N}, \frac{m}{N}, \, p)} \\ & \underset{N\to\infty}{\weakstar} \sum_{p=1}^{+\infty} \Leb_{\rm diag} \otimes \Delta_p
\end{align*}
(for this last convergence, we use the fact that $\frac{p}{N}$ is negligible with respect to $\frac{m}{N}$ since $p$ stays in a bounded interval when the measure is evaluated on a function in $C_c^0(\,]0,1]^2 \times \RR)$). The function $h_\alpha$ is an homeomorphism, in particular $h_\alpha^{-1}$ is continuous and proper, thus we have
the vague convergence
$$ \widetilde{\R}_N^{\alpha,+} \underset{N\to\infty}{\weakstar} \widetilde{\R}_\infty^{\alpha,+} = (h_\alpha^{-1})_* \big( \sum_{p=1}^{+\infty} \Leb_{\rm diag} \otimes \Delta_p \big) = \sum_{p=1}^{+\infty} \int_0^1 \Delta_{(t,t,\frac{p}{g_\alpha(t,t)})} \, dt.$$
We recall the formula $g_\alpha(t,t) = \frac{t^{1-\alpha}}{\alpha}$. Now, we project on the third component of this limit measure by taking a function of one variable $f\in C_c^0(\RR)$ and we compute a density, by using the change of variable $u=\frac{\alpha p}{t^{1-\alpha}}$, as follows
\begin{align*}
	\widetilde{\R}_\infty^{\alpha,+}(1\otimes f) & = \sum_{p=1}^{+\infty} \int_0^1 1 \times f\big(\frac{\alpha p}{t^{1-\alpha}}\big) \, dt = \sum_{p=1}^{+\infty} \int_{\alpha p}^{+\infty} f(u) \frac{u^{-\frac{2-\alpha}{1-\alpha}}}{1-\alpha}(\alpha p)^{\frac{1}{1-\alpha}} \, du \\
	& = \int_0^{+\infty}f(u) \frac{\alpha^{\frac{1}{1-\alpha}}}{1-\alpha} u^{-\frac{2-\alpha}{1-\alpha}} \sum_{p=1}^{\floor*{\frac{u}{\alpha}}} p^{\frac{1}{1-\alpha}} \, du = \int_0^{+\infty} f(u) \rho_\alpha(u) \, du
\end{align*}
where $\rho_\alpha$ is the pair correlation function defined in Theorem \ref{th:cv_correlations_simplified}. Let us denote by $\pi_3:[0,1]^2 \times \RR_+^* \to \RR_+^*$ the projection on the third coordinate. The above computation implies that $(\pi_3)_* \widetilde{\R}_\infty^{\alpha,+} = \rho_\alpha \Leb_{\RR_+}$. Since $\pi_3$ is a continuous and proper function, we finally deduce the convergence of $(\R_N^{\alpha,+})_{N\in\NN^*}$, namely
$$\R_N^{\alpha,+} = (\pi_3)_*\widetilde{\R}_N^{\alpha,+} \underset{N\to\infty}{\weakstar} (\pi_3)_* \widetilde{\R}_\infty^{\alpha,+} = \rho_\alpha \Leb_{\RR_+}.$$

\AtNextBibliography{\small}
\printbibliography[heading=bibintoc, title={References}]

\end{document}